\documentclass[12pt,reqno]{amsart}

\usepackage{a4wide}
\usepackage{hyperref}
\usepackage{needspace}
\usepackage[inline]{enumitem}
\usepackage{pgfplots}
\pgfplotsset{compat=1.18}

\usepackage{tikz}
\usetikzlibrary{decorations.pathreplacing, tikzmark}
\usetikzlibrary{shapes.geometric}
\usetikzlibrary{positioning}

\newtheorem{theorem}[subsection]{Theorem}
\newtheorem{lemma}[subsection]{Lemma}

\newtheorem{proposition}[subsection]{Proposition}
\newtheorem*{proposition*}{Proposition}
\newtheorem*{lemma*}{Lemma}
\newtheorem*{claim*}{Claim}

\providecommand{\Z}{\mathbb{Z}}
\providecommand{\N}{\mathbb{N}}
\providecommand{\C}{\mathbb{C}}
\providecommand{\R}{\mathbb{R}}

\providecommand{\T}{\mathbb{T}}

\providecommand{\wh}{\widehat}
\providecommand{\wt}{\widetilde}
\newcommand{\dd}{\,\mathrm{d}}

\providecommand{\supp}{\mathop{\rm supp}\nolimits}

\providecommand{\Bohr}{\mathop{\rm Bohr}\nolimits}
\renewcommand{\Re}{\mathop{\rm Re}\nolimits}

\numberwithin{equation}{section}

\newcounter{constcntbig} 
\newcounter{constcntlittle} 

\makeatletter
\newcommand{\newconstbig}[1]{%
    \refstepcounter{constcntbig}%
    \hypertarget{const:#1}{\mbox{}}%
    \protected@write\@auxout{}%
        {\string\newlabel{const:#1}{{\arabic{constcntbig}}{\thepage}}}%
    C_{\arabic{constcntbig}}%
}
\makeatother
\makeatletter
\newcommand{\newconstlittle}[1]{%
    \refstepcounter{constcntlittle}%
    \hypertarget{const:#1}{\mbox{}}%
    \protected@write\@auxout{}%
        {\string\newlabel{const:#1}{{\arabic{constcntlittle}}{\thepage}}}%
    c_{\arabic{constcntlittle}}%
}
\makeatother

\newcommand{\refconstbig}[1]{%
    \hyperlink{const:#1}{C_{\ref{const:#1}}}%
}

\begin{document}

\title{The quantitative Beurling-Helson theorem}

\author{Tom Sanders}
\address{Institute of Mathematics, University of Oxford, United Kingdom}
\email{tom.sanders@maths.ox.ac.uk}

\begin{abstract}
We show that for any $\varepsilon>0$ if $\phi:\T \rightarrow \T$ is continuous and $\|\exp(-2\pi i z \phi)\|_{A(\T)} =O_{|z|\rightarrow \infty}(\log^{\frac{1}{8}-\varepsilon} |z|)$ then $\phi(x)=wx+t$ for some $w \in\Z$ and $t \in \T$.
\end{abstract}

\maketitle

\section{Introduction}

Following Rudin \cite{rud::1}, for $G$ a compact Abelian group with Haar probability measure $\mu_G$ and dual group $\Gamma$ we define the Fourier transform of $f \in L_1(\mu_G)$ to be
\begin{equation}\label{eqn.ft}
\wh{f}(\gamma):=\int{f(x)\overline{\langle x,\gamma\rangle}\dd \mu_G(x)}.
\end{equation}
The space $A(G)$ is the functions $f \in L_1(\mu_G)$ such that $\wh{f} \in \ell_1(\Gamma)$ and is a Banach algebra with the norm $\|f \|_{A(G)}:=\|\wh{f}\|_{\ell_1(\Gamma)}$.

The Beurling-Helson theorem of the title is \cite[Theorem, p126]{Beurling:1953aa} in the special case of the circle group $\T:=\R/\Z$:
\begin{theorem}
Suppose that $\phi:\T \rightarrow \T$ is continuous and $$\|\exp(-2\pi i z\phi)\|_{A(\T)} = O_{|z| \rightarrow \infty}(1).$$ Then $\phi(x) = wx+t$ for some $w \in \Z$ and $t \in \T$.
\end{theorem}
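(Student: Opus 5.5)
Here is a plan following the classical route through idempotent measures and the Cohen--Helson structure theorem; the quantitative paper replaces that structure theorem with a quantitative version. Write $K$ for the uniform bound on $\|\exp(-2\pi i z\phi)\|_{A(\T)}$.

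\emph{Step 1: compactness in $z$.} The continuous maps $\phi:\T\to\T$ correspond to continuous $\tilde\phi:\R\to\R$ with $\tilde\phi(x+1)=\tilde\phi(x)+w$ for some $w\in\Z$, the winding number. Subtracting $wx$ costs nothing: $\exp(-2\pi i z wx)$ is not well defined for non-integer $z$, so I only use integer $z=n$. Then $\phi\mapsto n\phi$ is legitimate, and multiplying by the character $e^{-2\pi i nwx}$ preserves $A(\T)$ norms. I therefore assume $w=0$, so $\phi$ lifts to a continuous function $\psi:\T\to\R$. The hypothesis gives $\|e^{-2\pi i n\psi}\|_{A(\T)}\le K$ for all $n\in\Z$. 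The goal is to show $\psi$ is constant.

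\emph{Step 2: pass to a measure on $\T^2$.} For $f_n(x)=e^{-2\pi i n\psi(x)}$, consider the measure $\mu$ on $\T\times\T$ given by the push-forward of Haar measure under $x\mapsto(x,\psi(x)\bmod 1)$. Its Fourier coefficients are $\wh\mu(m,n)=\wh{f_n}(m)$, up to the sign conventions in \eqref{eqn.ft}. The hypothesis becomes $\sup_n\sum_m|\wh\mu(m,n)|\le K$. For any trigonometric polynomial $p$ in $x$ and any $n$, this gives $\bigl|\int p(x)e^{-2\pi i n\psi}\bigr|\le K\|p\|_\infty$. Hence convolution with $f_n$ behaves like a bounded multiplier, and the family $\{f_n\}$ generates an algebra homomorphism $\ell_1(\Z)\to A(\T)$, namely $\delta_n\mapsto f_n$, with norm at most $K$.

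\emph{Step 3: apply the homomorphism theorem.} By the Beurling--Helson/Cohen description of bounded homomorphisms $L^1(\Z)\to M(\Z)$ (dually $A(\T)\to A(\T)$), the map is given by a piecewise affine map on a coset ring. Concretely, $\{(m,n):\wh{f_n}(m)\ne 0\}$ lies in the coset ring of $\Z^2$, and there $n\mapsto \wh{f_n}$ is affine. The classical elementary alternative for this case is Kahane's argument: for $t\in\R$ approximate $e^{-2\pi i t\psi}$ using the boundedness in $n$. Take $g(x)=\sum_n c_n e^{-2\pi i n\psi(x)}$ with $\sum|c_n|$ small, use $\|h\circ\psi\|_A\le K\|\wh h\|_{\ell_1}$ for $h$ on $\T$, and obtain that every function $F\circ\psi$ with $F\in A(\T)$ lies in $A(\T)$ with norm $\le K\|F\|_A$. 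If $\psi$ is nonconstant, its image contains an interval. Then choose $F$ with $\|F\|_A$ bounded but $F\circ\psi$ having large $A$-norm, e.g.\ $F(y)=e^{2\pi i N y}$ localized. The quantity to exploit is the lower bound $\|e^{i\lambda\psi}\|_A\gg \log\lambda$ (or $\lambda^{c}$) for a nonlinear $C^0$ function. Since $\psi$ is only continuous, I would first reduce to a piecewise linear model: composing with an $A$-bounded $F$ shows $F\circ\psi\in A$ for all $F\in A$, and a function operating this way must be affine, by the Katznelson-style argument.

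\emph{Main obstacle.} The hard step is the final rigidity: showing that boundedness forces $\psi$ to be constant, i.e.\ linear on $\T$ with zero slope. I would first try the idempotent route. The measure $\mu$ satisfies $\mu\ast\mu$-type multiplicativity in the $n$-variable. Using Cohen's idempotent theorem on the graph measure, I would show that $\supp\wh\mu$ is in the coset ring of $\Z^2$. A graph measure whose Fourier support is a finite union of cosets must be supported on a closed subgroup coset, i.e.\ the line $y=t$. This gives $\psi$ constant and hence $\phi(x)=wx+t$.
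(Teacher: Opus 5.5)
\textbf{There is a genuine gap: the rigidity step is not supplied.} Your Steps 1 and 2 are fine. After removing the winding number, the hypothesis says exactly that $\delta_n\mapsto f_n$ is a bounded homomorphism $\ell_1(\Z)\to A(\T)$, i.e.\ that $F\mapsto F\circ\phi$ is bounded on $A(\T)$. But that is only a reformulation. The rigidity step is the whole content of the theorem, and none of your three routes works as written.

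\emph{The idempotent route uses the wrong object.} The push-forward measure $\mu$ has $\wh\mu(m,n)=\wh{f_n}(m)$, which is not $\{0,1\}$-valued; even for $\phi\equiv t$ it equals $1_{\{0\}}(m)e^{-2\pi i nt}$. So Cohen's idempotent theorem says nothing about $\mu$. The identity $f_nf_{n'}=f_{n+n'}$ is a homomorphism property, not idempotence. Your intended conclusion is also false. For $\phi(x)=\epsilon\sin 2\pi x$ the Jacobi--Anger expansion gives $\wh{f_n}(m)=J_m(-2\pi n\epsilon)$. Hence for all but countably many $\epsilon$ one has $\supp\wh\mu=\{(0,0)\}\cup(\Z\times(\Z\setminus\{0\}))$. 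This set lies in the coset ring of $\Z^2$, yet $\phi$ is not affine.

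\emph{The ``Kahane'' route is circular.} Showing that $\|e^{-2\pi i n\psi}\|_{A(\T)}$ is unbounded for non-affine continuous $\psi$ \emph{is} the theorem. A bound $\gg\log\lambda$ would be at least Kahane's open conjecture. The Katznelson-type results concern $F$ with $F\circ g\in A(\T)$ for all $g\in A(\T)$. The statement that a change of variables $\phi$ with $F\circ\phi\in A(\T)$ for all $F$ must be affine is precisely Beurling--Helson.

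\emph{The citation of Cohen's homomorphism theorem is misstated.} You may legitimately cite it as a black box; the paper remarks that Beurling--Helson is a special case. But with $G=\Z$ and target $M(\Z)=\ell_1(\Z)$, its output is that $\phi$ itself is piecewise affine, with pieces in the coset ring of $\T$ as a discrete group. It says nothing about $\{(m,n):\wh{f_n}(m)\neq0\}\subset\Z^2$. You would still need to show, e.g.\ via Neumann's lemma and continuity, that such a $\phi$ on the connected group $\T$ is globally affine.

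\textbf{The missing idea.} The idempotent is the indicator of the graph $\Gamma_\phi=\{(x,\phi(x))\}$ itself, on the \emph{discrete} group $(\T^2)_d$. For points $(x_j,y_j)\in\T^2$, put $P(m,n):=\sum_jc_je^{2\pi i(mx_j+ny_j)}$. Then $\sum_m\wh{f_n}(m)P(m,n)=\sum_jc_je^{2\pi in(y_j-\phi(x_j))}$, which has modulus at most $K\|P\|_{\ell_\infty(\Z^2)}$. Ces\`aro-averaging in $n$ gives $|\sum_{j:y_j=\phi(x_j)}c_j|\le K\|P\|_\infty$. By Eberlein's criterion, $1_{\Gamma_\phi}$ is then the transform of an idempotent measure on the Bohr compactification of $\Z^2$. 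Cohen's theorem puts $\Gamma_\phi$ in the coset ring of $(\T^2)_d$, and affineness follows as above.

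\textbf{How the paper proceeds.} The paper gets this statement as a special case of Theorem \ref{thm.main}, using a finite analogue of exactly this object. The Konyagin--Shkredov Dirichlet approximation (Proposition \ref{prop.datks}) produces a discretised graph $\Gamma\subset\T_N\times\T_Q$ with $\|1_\Gamma\|_{A(\T_N\times\T_Q)}\le M$. Cohen's theorem is contentless on finite groups, so the paper proves the quantitative partial version, Proposition \ref{prop.partialcohen}. That proof uses Balog--Szemer\'edi--Gowers, a Freiman-type Bohr system, and almost-periodicity. Primality of $N$ then shows $\Gamma$ is close to a coset of size $N$, and the limiting Lemma \ref{lem.limit} finishes the argument.
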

Kahane \cite[p121]{Kahane:1963aa} conjectures that the weaker condition
\begin{equation*}
\|\exp(-2\pi i z\phi)\|_{A(\T)} = o_{|z|\rightarrow \infty}(\log |z|)
\end{equation*}
would suffice for the same conclusion, and notes that if true this is the best possible result as nonlinear piecewise linear functions $\phi$ have $\|\exp(-2\pi i z\phi)\|_{A(\T)} =O_{|z| \rightarrow \infty}(\log |z|)$. 

This last calculation of Kahane was strengthened by Lebedev in \cite{Lebedev:2012ab}, who also in a separate paper proved the first result towards Kahane's conjecture:
\begin{theorem}[{\cite[Theorem, p122]{Lebedev:2012aa}}]
Suppose that $\phi:\T \rightarrow \T$ is continuous and $$\|\exp(-2\pi i z\phi)\|_{A(\T)} = o_{|z| \rightarrow \infty}\left(\frac{\log \log |z|}{\log \log \log |z|}\right)^{\frac{1}{12}}.$$ Then $\phi(x) = wx+t$ for some $w \in \Z$ and $t \in \T$.
\end{theorem}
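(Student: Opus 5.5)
The plan is to prove Lebedev's theorem by contradiction. Suppose $\phi$ is not of the form $wx+t$. The first step is a rigidity reduction: a continuous map $\phi:\T\to\T$ lifts to $\tilde\phi:\R\to\R$ with $\tilde\phi(x+1)=\tilde\phi(x)+w$. Subtracting the character $wx$ costs nothing in $A(\T)$, since multiplying by a character is an isometry. So we may assume $\phi$ is a genuine continuous real function on $\T$ that is not constant.

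If $\phi$ is not affine, there is some scale at which it genuinely bends. More precisely, we look for points $x_0$ and $h>0$ such that the second difference $\Delta_h^2\phi(x_0)=\phi(x_0+h)-2\phi(x_0)+\phi(x_0-h)$ is nonzero. Along a suitable sequence of such configurations, the first differences $\phi(x_0\pm h)-\phi(x_0)$ are small while the second difference is not negligible relative to them. This is the only use of continuity, and the constants depend on $\phi$. That dependence is why the conclusion is $o(\cdot)$ rather than an explicit bound.

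The main step is to turn bounded Fourier algebra norm into arithmetic structure. Put $f_z=\exp(-2\pi i z\phi)$, so $|f_z|=1$ and $\|f_z\|_{A(\T)}\le M$. A unimodular function of small $A$-norm has large Fourier coefficients: $\|\wh f_z\|_{\ell_2}=1$ and $\|\wh f_z\|_{\ell_1}\le M$ give $\|\wh f_z\|_\infty\ge 1/M$. More usefully, by a Cohen/Green--Sanders idempotent-type or Bogolyubov argument, $f_z$ correlates with characters $\gamma$ on a Bohr set of rank $O(M^{O(1)})$ and radius $\exp(-M^{O(1)})$. On that Bohr set, $z\phi$ is then approximately linear modulo $1$.

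I would apply this with $z$ ranging over many frequencies and localise to a short interval around $x_0$, restricting $f_z$ by convolution with a Fejér kernel, which does not increase the $A$-norm much. On an interval of length $\delta$ the local linearity forces
\[
z\,\Delta_h^2\phi(x_0)\in \Z+O(\eta)
\]
for all $h$ below the Bohr radius. Since $\phi$ is continuous, $\Delta_h^2\phi(x_0)$ varies continuously in $h$. Taking $z$ large makes $z\,\Delta_h^2\phi(x_0)$ sweep through non-integers, which contradicts the approximate integrality once the continuity modulus is balanced against the Bohr radius.

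This balancing is where the exponents come from. The radius is $\exp(-M^{O(1)})$, and we need $z\approx\exp(\exp(M^{c}))$ before the sweep is forced. Inverting gives $M\gg(\log\log z)^{c}$. The $\log\log\log$ loss comes from the rank and pigeonholing over the Bohr set.

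The main obstacle is the structural step: extracting approximate local linearity of $z\phi$ from the $A$-norm bound, with polynomial dependence on $M$ in the rank, and doing so uniformly enough in $z$. I would first try a direct Cohen-type argument on the measure $f_z\,\overline{f_z(\cdot+h)}$, which has $A$-norm at most $M^2$. It is close to the constant $\exp(-2\pi i z h\phi')$ only in a heuristic sense, and making that rigorous for merely continuous $\phi$ is the delicate part.
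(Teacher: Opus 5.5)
There is a genuine gap, and it is in the step you flag as delicate. You pass from ``$z\phi$ is approximately linear on a Bohr set of rank $M^{O(1)}$ and radius $\exp(-M^{O(1)})$'' to ``$z\,\Delta_h^2\phi(x_0)\in\Z+O(\eta)$ for all $h$ below the Bohr radius''. That treats the Bohr radius as a metric radius in $\T$, and the conclusion is false for a single $z$, because $A(\T)$-norms are dilation invariant: $\|g(L\,\cdot)\|_{A(\T)}=\|g\|_{A(\T)}$ for every integer $L\neq 0$. Take $\psi(x)=\frac14\sin 2\pi x$, an integer $z\geq 1$, and $\phi:=\psi(L\,\cdot)/z$. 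Then $\|f_z\|_{A(\T)}=\|\exp(-2\pi i\psi)\|_{A(\T)}$ does not depend on $L$, yet with $x_0=h=\frac1{4L}$ one gets $z\,\Delta_h^2\phi(x_0)=-\frac12$. A Bohr set of bounded rank and radius has measure bounded below, but its frequencies are uncontrolled (here it is $\{x:\|Lx\|_\T<\rho\}$). So it need not contain any interval of length bounded below in terms of $M$, and localising to a short interval around $x_0$ captures nothing. If instead you let the threshold depend on $z$, then for Lipschitz $\phi$, such as Kahane's piecewise linear examples, the bound $|z\,\Delta_h^2\phi(x_0)|\leq 2\,\mathrm{Lip}(\phi)\,zh$ already makes the conclusion automatic for $h\ll 1/z$. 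A contradiction could then only come from scales your sketch does not control, so the balancing that is supposed to produce $\log\log|z|$ and the $\log\log\log|z|$ loss has nothing underneath it. Since the example has small norm at only one frequency, it also shows that the frequencies must be combined \emph{before} a structure theorem is applied, not afterwards.

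The missing idea is to discretise to $\T_N$ with $N$ prime, and to work with a scale-free arithmetic object into which all frequencies enter at once. In Lebedev's argument, sketched in Section \ref{sec.proof}, this object is $E=\{(x,y,z):\phi(x+y+z)-\phi(x+y)-\phi(x+z)+\phi(x)=0\}$, which Kahane's lemma shows is large. Dirichlet's theorem (Theorem \ref{thm.dat} with $A=\T_N$, $K=N$) gives $\phi^*:\T_N\to\T_Q$ with $Q\leq N^N$, so the indicator of the discrete level set is exactly $\frac1Q\sum_{n\in\Z_Q}(\Phi^*)^n$. Using the contractive restriction (\ref{eqn.alg}), its $A(\T_N^3)$-norm is therefore roughly at most $\sup_{|n|\leq Q}\|\exp(-2\pi i n\phi)\|_{A(\T)}^4$. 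Green and Konyagin's quantitative continuity then forces this level set to be almost all of $\T_N^3$, and a limiting argument like Lemma \ref{lem.limit} gives $E=\T^3$, so $\phi$ is affine. The double logarithm comes from $\log\log Q\approx\log N$. Roughly, $\frac1{12}=\frac13\cdot\frac14$ combines the Green--Konyagin exponent with the four copies of $\phi$ in $\Phi$. Alternatively, the statement is an immediate corollary of Theorem \ref{thm.main}, since $(\log\log|z|/\log\log\log|z|)^{1/12}=O(\log^{1/16}|z|)$. That proof replaces $E$ by the graph of $\phi^*$ in $\T_N\times\T_Q$, whose indicator has norm about $M$ rather than $M^4$. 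Proposition \ref{prop.datks} makes $Q$ far smaller than $N^N$, and Proposition \ref{prop.partialcohen} with Lemma \ref{lem.limit} shows the graph is close to a coset.
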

Lebedev's argument was refined by Konyagin and Shkredov:
\begin{theorem}[{\cite[Theorem 2, p110]{konshk::0}}]
Suppose that $\phi:\T \rightarrow \T$ is continuous and $$\|\exp(-2\pi i z\phi)\|_{A(\T)} = o_{|z| \rightarrow \infty}\left(\frac{\log |z|}{( \log \log |z|)^6}\right)^{\frac{1}{22}}.$$ Then $\phi(x) = wx+t$ for some $w \in \Z$ and $t \in \T$.
\end{theorem}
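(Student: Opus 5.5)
We argue by contradiction. Suppose $\phi$ is continuous and not of the form $wx+t$, and that $M(z):=\|\exp(-2\pi i z\phi)\|_{A(\T)}$ satisfies the stated bound. The plan is to discretise and convert the hypothesis into an additive-combinatorial statement about the graph of a map $\Z/N\Z\to\Z/N\Z$. Then we use Fourier-analytic and Freiman-type structure theory to force that map to be almost affine. Finally, continuity of $\phi$ turns this into a contradiction with non-linearity. Throughout, the size of $M$ is compared with $N$, which is tied to $|z|$.

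\emph{Step 1 (discretisation).} Choose a large integer $N$ and define $\psi:\Z/N\Z\to\Z/N\Z$ by rounding, $\psi(x):=\lfloor N\phi(x/N)\rceil$. Restricting a function with absolutely convergent Fourier series to the subgroup $\frac1N\Z/\Z$ does not increase its algebra norm. Using this, and absorbing the rounding error via a smooth cut-off (which costs only a constant factor in $A$-norm), we get for each $k\in\Z/N\Z$ that $\|e(k\psi/N)\|_{A(\Z/N\Z)}\ll \max_{|z|\le N} M(z)=:M$. Now consider the graph $\Gamma:=\{(x,\psi(x))\}\subset(\Z/N\Z)^2$. Its Fourier coefficient at $(r,k)$ is $N^{-1}\wh{e(-k\psi/N)}(r)$, up to normalisation. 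Summing over $r$ and averaging over $k$ gives
\begin{equation*}
\|1_\Gamma\|_{A((\Z/N\Z)^2)}\le \max_k\|e(k\psi/N)\|_{A(\Z/N\Z)}\ll M.
\end{equation*}

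\emph{Step 2 (additive structure).} A set of small algebra norm has large additive energy. By Parseval and H\"older,
\begin{equation*}
\frac{|\Gamma|}{N^2}=\sum|\wh{1_\Gamma}|^2\le \|\wh{1_\Gamma}\|_{\ell_\infty}^{2/3}\Big(\sum|\wh{1_\Gamma}|^{4}\Big)^{1/3}\|\wh{1_\Gamma}\|_{\ell_1}^{0}\cdots,
\end{equation*}
or more simply $\sum|\wh{1_\Gamma}|^2\le\|\wh{1_\Gamma}\|_\infty\|\wh{1_\Gamma}\|_1$ combined with $\sum |\wh{1_\Gamma}|^4\ge (\sum|\wh{1_\Gamma}|^2)^2/\|\wh{1_\Gamma}\|_1^{2}\cdot(\ldots)$. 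Either route yields $E(\Gamma)\gg |\Gamma|^3/M^{O(1)}$. We then apply a polynomial Balog--Szemer\'edi--Gowers theorem to find $\Gamma'\subset\Gamma$ with $|\Gamma'|\ge M^{-O(1)}N$ and $|\Gamma'+\Gamma'|\le M^{O(1)}|\Gamma'|$. Next, Chang's version of Freiman's theorem (or a Bogolyubov--Ruzsa lemma) places a large part of $\Gamma'$ inside a generalised progression of dimension $M^{O(1)}$. Since $\Gamma'$ is a graph, projecting shows that $\psi$ agrees with an affine-type (Freiman homomorphism) map on a set $X\subset\Z/N\Z$ of density $\exp(-M^{O(1)})$ that is itself progression-structured.

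\emph{Step 3 (contradiction with continuity).} Non-linearity of continuous $\phi$ gives a fixed scale $\delta>0$ and points $a,a+h,a+2h$ with $|\phi(a)-2\phi(a+h)+\phi(a+2h)|\ge\delta$ in $\T$. By uniform continuity this persists on small neighbourhoods. On the other hand, Step 2 supplies many three-term progressions $x,x+h',x+2h'$ in $X$ whose common difference $h'$ ranges over a Bohr-type set. On these, $\psi$ has vanishing second difference. A pigeonholing/Bohr-set argument should find such a progression near $(a,h)$ once $N$ is large relative to $\exp(M^{O(1)})$ and $\delta^{-1}$. This forces $M\gg(\log N)^{c}$ with $c$ explicit. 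Tracking the exponents through the BSG and Chang losses, and the $(\log\log)$ factors from the Bohr-set pigeonholing, gives the $1/22$ and the $(\log\log|z|)^{6}$; this contradicts the hypothesis.

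\emph{Main obstacle.} I expect the hardest step to be Step 3's passage from ``affine on a sparse structured set $X$'' back to a statement about the genuinely continuous $\phi$. The affine structure is only along a generalised progression, not an interval, so one must control the progression's geometry well enough to land near a prescribed $(a,h)$. This quantitative localisation is where I would expect the exponent to be lost. The first thing I would try is to restrict $\phi$ to a short interval around $a$ before discretising, so that $X$ is forced to be dense at the relevant scale.
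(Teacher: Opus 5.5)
Your outline has two genuine gaps, and they are exactly where the real work lies. The paper does not prove this statement directly: it follows from Theorem \ref{thm.main} because $\frac{1}{22}<\frac18$, so the comparison below is with that proof.

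\textbf{First, the justification of Step 1 does not work.} Rounding $N\phi(x/N)$ to the nearest integer leaves an error $\epsilon(x)\in[-\frac{1}{2N},\frac{1}{2N}]$. So on $\T_N$ you have $e(k\psi/N)=e(k\phi)\,e(k\epsilon)$, and for $|k|$ comparable to $N$ the factor $e(k\epsilon(x))$ is a phase error of order $1$, governed by the fractional parts of $N\phi(x/N)$. Nothing in the hypothesis controls its $A(\Z/N\Z)$-norm, which for erratic $\epsilon$ is of order $\sqrt N$. A vertical smoothing of the graph does have small algebra norm, but it is no longer an indicator function. Both your Step 2 and the argument that actually works need a genuine set. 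To keep an indicator you need $\phi^*:\T_N\to\T_Q$ with $\|\phi-\phi^*\|_\T=O(1/(NQ))$ rather than $O(1/Q)$. Then each of the $NQ$ coefficients $\wh{1_\Gamma}(r,s)$ is within $O(1/(NQ))$ of $\frac1Q\wh{f_s}(r)$, and the total error is $O(1)$. This is Proposition \ref{prop.datks}, Dirichlet's theorem in the Konyagin--Shkredov form. Its price is that $Q$ can be as large as $\exp(O(\log^{2/(1-2c)}N))$. So the norm you actually control is $M\approx\log^{c}Q\approx\log^{2c/(1-2c)}N$, not $\max_{|z|\le N}M(z)$.

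\textbf{Second, Steps 2--3 cannot produce a contradiction.} They use the norm bound only through $E(\Gamma)\gg|\Gamma|^3M^{-O(1)}$, together with continuity. Now take the tent map $\phi(x)=a\min\{x,1-x\}$ with $a\in\Z\setminus\{0\}$. It is continuous and not of the form $wx+t$, yet its discretised graph is exactly affine over each half of $\Z/N\Z$. Hence $E(\Gamma)\gg N^3$, so your input holds with $M=O(1)$, and Steps 2--3 as described would rule this $\phi$ out. Concretely, BSG and Freiman may return an $X$ lying entirely over one linear piece, and no pigeonholing moves it next to the kink. Restricting to a short interval around the kink does not help, since $\phi$ is linear on each side of it.

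What is missing is structure on \emph{almost all} of $\Gamma$, which requires using the algebra norm a second time. In Proposition \ref{prop.partialcohen}, the quantitative continuity of $A(G)$-functions (Proposition \ref{prop.cts}) shows that $1_\Gamma\ast\mu$ is within $\eta$ of $0$ or $1$ everywhere. It also shows that the set where it is near $1$ is invariant under the subgroup $V$ generated by a Bohr set. BSG and Freiman are used only to show that this set is non-empty and that $|V|$ is large. This gives $|\Gamma\cap(x_0+V)|\ge(1-\eta)|V|$. Since $N$ is prime and $\Gamma$ is a graph, $|V|=N$, and $\Gamma$ is within $2\eta N$ of the graph of an affine map on all of $\T_N$. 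Continuity enters only after this, in Lemma \ref{lem.limit}: the second differences of $\phi^*$ vanish off $O(\eta N^3)$ triples, contradicting $\int|\Phi-1|\dd\mu_{\T^3}>0$.

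The exponent comes from requiring $|V|>1$, i.e.\ $M^3\ll\log N$ up to logarithms. With the $M$ above this means $\frac{6c}{1-2c}<1$. Your claim that tracking BSG/Chang losses yields $\frac{1}{22}$ and $(\log\log|z|)^6$ is unsupported; those exponents come from Konyagin and Shkredov's different argument.
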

Our aim is to show the following:
\begin{theorem}\label{thm.main}
Suppose that $\phi:\T \rightarrow \T$ is continuous and $\varepsilon>0$ is such that $$\|\exp(-2\pi i z\phi)\|_{A(\T)} = O_{|z| \rightarrow \infty}(\log^{\frac{1}{8}-\varepsilon}  |z|).$$ Then $\phi(x) = wx+t$ for some $w \in \Z$ and $t \in \T$.
\end{theorem}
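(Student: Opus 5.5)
We argue by contradiction and follow the Lebedev / Konyagin–Shkredov strategy, with a more efficient additive-combinatorial core. Suppose $\phi$ is not of the form $wx+t$, and write $M(z):=\|\exp(-2\pi i z\phi)\|_{A(\T)}$.

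The first step is a reduction to a finite model. Since $\phi$ is continuous and non-affine, lift it to $\tilde\phi:\R\to\R$ with $\tilde\phi(x+1)=\tilde\phi(x)+w$. Then $\psi:=\tilde\phi-wx$ is periodic, continuous and non-constant-affine. Hence $\psi$ is not "locally linear": for a suitable small interval there are three points $x-h,x,x+h$ with
$$\psi(x+h)-2\psi(x)+\psi(x-h)=\delta\neq 0.$$
From this we will extract, for each large parameter $N$, a function $f_N(n):=\exp(-2\pi i z\,\tilde\phi(x_0+ n h/N))$ on $\{1,\dots,N\}$, or on $\Z/N'\Z$ after a smooth cutoff. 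Here $z$ is chosen of size roughly $N^{O(1)}$ so that the quadratic discrepancy $z\delta$ forces $f_N$ to look genuinely non-linear at all scales.

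Transferring the $A(\T)$ norm to this discretised setting uses the standard facts that restriction to arithmetic progressions and multiplication by a smooth bump do not increase the Wiener norm by more than a constant. This yields functions $f:\Z/N\Z\to\{|\cdot|=1\}$, at least on a large portion, with
$$\|f\|_{A(\Z/N\Z)}\ll M(z)\ll \log^{1/8-\varepsilon}N.$$
We also need the extra structural information that the "derivative" $f(n+1)\overline{f(n)}$ varies slowly but not linearly. Concretely, the phases $z\tilde\phi$ on dyadically many scales are "independent" non-linear pieces.

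The second step is the structural input. If $f$ is unimodular with $\|f\|_A\le K$, then a quantitative idempotent / Cohen-type argument (I would use Green–Sanders style Bogolyubov–Chang arguments) shows that $\wh f$ has a large Fourier coefficient on a Bohr set. More precisely, $f$ correlates with a character up to $\exp(-O(K^{c}))$ on a Bohr set of rank $O(K^{c'})$ and suitable radius, with the exponent $8$ arising from composing these steps. Iterating across the $\sim\log N$ nearly independent scales, each scale costs a multiplicative factor in the norm if $f$ is non-linear there. Writing $f$ as a product of pieces supported at different scales, the Wiener norm should grow like a power of the number of scales, forcing $K\gg(\log N)^{1/8}$. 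This would contradict $M(z)\ll\log^{1/8-\varepsilon}|z|$ once $z=N^{O(1)}$.

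The main obstacle is this step. We must show that $\|f\|_A\ge c\,(\#\text{scales})^{1/8}$ when $f$ is non-affine across many dyadic scales, with polynomial rather than the previously lossier $1/22$-type dependence. I would first try an energy-increment argument. At each scale, non-linearity of the phase makes the correlation with any single character drop by a fixed fraction. A Chang-type lemma then controls how many characters can carry $\ell_1$ mass, with quantitative Bohr-set bounds polynomial in $K$.
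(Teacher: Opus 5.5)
Your proposal has a genuine gap. The inequality you flag as the ``main obstacle'' is essentially the whole theorem: that a unimodular $f$ which is non-affine across $\sim\log N$ dyadic scales must have $\|f\|_{A}\gg(\#\text{scales})^{1/8}$. Nothing in the sketch proves it or explains where the exponent $\frac18$ comes from, since the ``composition of steps'' producing $8$ is never carried out.

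The heuristics you offer for it also fail:
\begin{itemize}
\item Since $\phi$ is merely continuous and $z=N^{O(1)}$, the increments $z(\tilde\phi(x+h/N)-\tilde\phi(x))$ are essentially arbitrary modulo $1$. So there is no reason for $f(n+1)\overline{f(n)}$ to vary slowly. One nonzero second difference at a single scale $h$ also gives no ``independent'' non-linearity at other scales.
\item A multiplicative gain per scale is impossible. A non-affine piecewise-linear $\phi$ is non-linear at every scale around a kink, yet Kahane's bound gives $\|\exp(-2\pi i z\phi)\|_{A(\T)}=O(\log|z|)$. So on average each dyadic scale adds only $O(1)$.
\item Writing $f$ as a product of pieces only yields the upper bound $\|gh\|_A\le\|g\|_A\|h\|_A$, never a lower bound.
\item Even granting that $f$ correlates with a character on a Bohr set, you give no mechanism that turns this into a contradiction with non-affinity.
\end{itemize}

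The paper's route avoids multi-scale analysis entirely, and rests on three ingredients absent from your plan.

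\textbf{Bounded-denominator approximation.} After restricting to $\T_N$ with $N$ prime (restriction is contractive, as $\T_N$ is a closed subgroup), the paper uses the Konyagin--Shkredov refinement of Dirichlet's theorem. This is Proposition~\ref{prop.datks}, proved via the Rudin-type bound of Lemma~\ref{lem.ks} on ``$\eta$-dissociated'' sets. It gives $\phi^*:\T_N\to\T_Q$ with $\|\phi-\phi^*\|_\T\le H/NQ$ and $\log Q\ll\log^{2/(1-2c)}2N$.

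\textbf{Passage to the graph.} The key step is to consider the graph $\Gamma$ of $\phi^*$ in $\T_N\times\T_Q$. This converts the unimodular functions $\exp(-2\pi i s\phi)$ into an idempotent, which is what Cohen-type results are about. Since $\wh{1_\Gamma}(r,s)\approx Q^{-1}\wh{f_s}(r)$, one has $\|1_\Gamma\|_{A}\le M\ll\log^{2c/(1-2c)}2N$. This uses the hypothesis once, rather than through the four factors of the second-difference function.

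\textbf{Concentration on a coset.} Proposition~\ref{prop.partialcohen} combines Balog--Szemer\'edi--Gowers, a Freiman-type theorem with Bohr systems, and quantitative continuity of $A(G)$-functions. It gives $V\le\T_N\times\T_Q$ with $\|1_\Gamma\ast\mu_V\|_\infty\ge1-\eta$ and $|V|\ge\exp(-O(M^3\eta^{-3}\log^5 2M\eta^{-1}))N$. This exceeds $1$ because $3\cdot c\cdot\frac{2}{1-2c}<1$, i.e.\ $c<\frac18$, and that is where $\frac18$ comes from. Primality of $N$ forces $|V|=N$, so $\Gamma$ is within $2\eta N$ of a coset. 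Lemma~\ref{lem.limit} then converts this into $\phi$ being affine, via the second differences of $\phi^*$ and weak convergence of $\mu_{\T_N}$.
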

There is considerable further discussion of the problem in \cite{Lebedev:2012aa}.

\subsection*{Notation} We use big-$O$ notation in the usual way in the paper, but also absolute constants $C_1,C_2,\dots> 1$ and $c_1,c_2,\dots \in (0,1)$. All of the absolute constants could be calculated. We give them names to help make clear the dependencies in some proofs, and in particular that those dependencies are not circular.

It will also be useful to generalise our notation for Haar measure on a compact Abelian group to any open $S \subset G$, by writing $\mu_S$ for the measure $\mu_G$ restricted to $S$ and normalised to have total mass $1$.

Finally, we write $\T_N:=\{x/N + \Z: x \in \Z\} \leq \T$ and equip it with Haar probability measure, and then $\Z_N:=\{x+N\Z: x \in \Z\}$ is a dual of $\T_N$ with the usual pairing.

\section{Proof of Theorem \ref{thm.main}}\label{sec.proof}

We pursue the same ideas as Lebedev \cite{Lebedev:2012aa} with the improvements of Konyagin and Shkredov \cite{konshk::0}. Lebedev begins by considering
\begin{equation*}
E:=\{(x,y,z) \in \T^3: \phi(x+y+z)-\phi(x+y)-\phi(x+z)+\phi(x)=0\},
\end{equation*}
which he shows is a large subset of $\T^3$ by an argument \cite[Lemma 2, p126]{Lebedev:2012aa} which he attributes to Kahane. The hypothesis -- that is the bound on $\|\exp(-2\pi i z\phi)\|_{A(\T)}$ as a function of $z$ -- implies that a suitably smoothed version of
\begin{equation*}
\Phi(x,y,z):=\exp(2\pi i (\phi(x+y+z)-\phi(x+y)-\phi(x+z)+\phi(x)))
\end{equation*}
is in $A(\T^3)$.

Functions in $A(\T^3)$ are continuous in a quantitative sense: they have a set of positive measure over which they do not vary very much, and this measure has a lower bound depending only on the norm of the function. This can be used to show that $E$, which is a level set of $\Phi$, is almost the whole of $\T^3$ and then a limiting argument tells us that $E=\T^3$ and this implies $\phi$ has the required form.

The quantitative continuity of functions in $A(\T^3)$ is captured in a result of Green and Konyagin \cite{grekon::} recorded in \cite[(2), p124]{Lebedev:2012aa}, and the final limiting argument is then on \cite[p129]{Lebedev:2012aa}.

The final ingredient in Lebedev's argument is a discrete approximation \cite[p124]{Lebedev:2012aa} which allows him to take advantage of the formulation of Green and Konyagin's results recorded in \cite{grekon::}. This approximation was considerably improved by Konyagin and Shkredov in \cite{konshk::0}.

The main difference of the present paper is that instead of considering the set $E$, we consider the graph $\Gamma:=\{(x,\phi(x)): x \in \T\}$. The downside of this is that $\Gamma$ is not large in the same way that $E$ is, but we can recover the situation by using the standard additive combinatorial tools of Freiman's Theorem and the Balog-Szemer{\'e}di-Gowers theorem. On the other hand, the upside is that the algebra norm of (a discretisation of) $\Gamma$ is smaller than the corresponding norm when considering the set $E$. This saving leads to our improvement.

We turn now to recording the three main parts of our argument before giving the proof. 

\begin{proposition}[Proposition \ref{prop.datks}]
Suppose that $C \geq 1$ and $c \in (0,1/2)$. Then there are constants $F(C,c), H(C,c)\geq 1 $ such that if $N \in \N^*$ and $\phi:\T \rightarrow \T$ is continuous with
\begin{equation*}
\|\exp(-2\pi i z \phi)\|_{A(\T_N)} \leq C\log^c(3+|z|) \text{ for all }z \in \Z.
\end{equation*}
Then there is an integer $1 \leq Q \leq \exp(F(C,c)\log^{\frac{2}{1-2c}}2N)$ and $\phi^*:\T_N \rightarrow \T_Q$ such that $\|\phi(x)-\phi^*(x)\|_\T\leq \frac{H(C,c)}{NQ}$ for all $x \in \T_N$.
\end{proposition}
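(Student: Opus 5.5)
We prove the proposition by the approach of Konyagin and Shkredov: from the Fourier hypothesis we find many rational frequencies along which the values $\phi(x)$, $x\in\T_N$, are nearly rational with a common small denominator. Write $f_z(x):=\exp(-2\pi i z\phi(x))$ for $x \in \T_N$. Then $\|f_z\|_{L_2(\T_N)}=1$ and $\|\wh{f_z}\|_{\ell_1(\Z_N)}\leq C\log^c(3+|z|)$. So for each $z$ there is $r(z)\in\Z_N$ with $|\wh{f_z}(r(z))| \geq \|\wh{f_z}\|_{\ell_2}^2/\|\wh{f_z}\|_{\ell_1}\geq C^{-1}\log^{-c}(3+|z|)$. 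In other words, $z\phi(x)$ correlates with the linear phase $r(z)x$ on $\T_N$.

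\textbf{Step 1: approximate additivity of $r$.} Since $f_{z+z'}=f_zf_{z'}$, the spectrum of $f_{z+z'}$ is controlled by the sumset of the spectra of $f_z$ and $f_{z'}$. Restrict to the set $L(z)$ of large coefficients, of size at most $O(\log^{2c}|z|)$. We want many additive quadruples $z_1+z_2=z_3+z_4$ with $r(z_1)+r(z_2)=r(z_3)+r(z_4)$. The plan is to show that the graph $\{(z,r) : z\in[-M,M],\ r\in L(z)\}$ has additive energy at least $M^3/\exp(O(\log^{2c}M))$. Applying Balog--Szemer\'edi--Gowers and then Freiman's theorem gives a large set of $z$ on which $r$ agrees with a Freiman homomorphism. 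Restricted to a progression $P$, this homomorphism is $z\mapsto az+b$ with $a\in\Z_N$.

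\textbf{Step 2: closing up with a denominator.} Suppose $r(z)=az+b$ for $z$ in a long progression $P$ with common difference $d$. Then $\wh{f_{z}}(az+b)$ is large along $P$. Averaging over $z\in P$ forces $d\phi(x)-ad\,x$ to lie near $\Z$ for most $x\in\T_N$: the exponential sums $\sum_{z\in P}\exp(-2\pi i z(\phi(x)-ax/N))$ are large only when $\|d(\phi(x)-ax/N)\|_\T\lesssim 1/|P|$. We then upgrade ``most $x$'' to ``all $x$''. For this we use continuity of $\phi$ and the fact that the hypothesis holds for every $z$, iterating the argument over translates $x+\T_N$ or using additional frequencies $z$. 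The result is that $\phi(x)$ lies within $O(1/(N d\,|P|))$ of $\frac{1}{Nd}\Z$. Taking $Q$ to be a multiple of $d$ and rounding, we obtain $\phi^*:\T_N\to\T_Q$ with $\|\phi-\phi^*\|_\T\leq H/(NQ)$. This needs the rounding scale $1/(NQ)$ to match: $|P|$ must be comparable to $Q$, with constant $H(C,c)$.

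\textbf{Step 3: bookkeeping, and the main obstacle.} The quantitative loss comes from BSG and Freiman, which are polynomial and quasi-polynomial in the doubling $K\approx\log^{2c}M$. With Sanders' bounds, Freiman loses $\exp(O(\log^{O(1)}K))$ in density and dimension. To get $d\leq \exp(F\log^{2/(1-2c)}2N)$, we choose $M$ so that $\log M\approx \log^{1/(1-2c)}N$. This makes the density loss $\exp(-\log^{2c}M)$ compatible with needing $|P|$ of length about $N$. The exponent $2/(1-2c)$ should then come from solving $\log M = \log N + \log^{2c} M$-type relations, which leaves a square from the Cauchy--Schwarz step.

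The main obstacle is Step 1. Obtaining genuinely large additive energy from an $\ell_1$ bound of only $\log^c|z|$ (rather than $O(1)$) requires a careful pigeonhole over dyadic scales of $\wh{f_z}$. It also requires controlling the multiplicity of choices $r\in L(z)$. I would first try the Konyagin--Shkredov energy argument and insert it into BSG, accepting losses polynomial in $\log M$.
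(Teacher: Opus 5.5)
\textbf{There is a genuine gap, and it is in Step 2, not Step 1.}

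You place the main obstacle in Step 1, but the energy estimate there is close to routine. Restrict to coefficients above a threshold of order $\log^{-2c}M$ in the identity $1=\langle f_zf_{z'},f_{z+z'}\rangle$. This already gives $\gg M^2$ additive triples in a set of size $M\log^{O(c)}M$. Step 2, however, is where all the content of the proposition lies, and it does not work.

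\emph{The averaging step is false as stated.} Knowing that $|\wh{f_z}(az+b)|$ is large for each $z\in P$ says nothing about how the phases vary with $z$. So nothing forces $\sum_{z\in P}\exp(-2\pi i z(\phi(x)-ax))$ to be large. For example, $\phi\equiv\theta$ with $\|\theta\|_\T\geq\frac14$ satisfies the hypothesis with $C=1$, and $r\equiv 0$ is linear on $P=[-M,M]$. Yet $\|\theta\|_\T$ is not $O(1/M)$. What averaging $|\wh{f_z}(az+b)|^2$ over $P$ actually gives is weaker: a proportion $\gg\log^{-2c}M$ of pairs $(x,y)$ (not most $x$) have $d(\psi(x)-\psi(y))$ close to $\Z$, where $\psi(x):=\phi(x)-ax$. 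This is information on only a small fraction of $\T_N$. It also holds only modulo an unknown shift $t$, whose own rational approximation enlarges $Q$ and hence the accuracy you need. The proposal never resolves this circularity. Separately, $\|d(\psi(x)-t)\|_\T\ll 1/|P|$ puts $\phi(x)$ only within $O(1/(d|P|))$ of $ax+t+\frac1d\Z$, not within $O(1/(Nd|P|))$.

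\emph{There is no mechanism to reach every $x\in\T_N$,} which the conclusion demands. Continuity of $\phi$ cannot help: the hypothesis and conclusion involve only $\phi|_{\T_N}$, and $F,H$ may depend only on $C,c$. Translates of $\T_N$ play no role. The $A(\T_N)$ bound does not survive restricting $f_z$ to the exceptional set, so you cannot iterate on the complement. Your target, that $\phi$ is uniformly within $O(1/(NQ))$ of $ax+t+\frac1d\Z$, is much stronger than the proposition. It is also stronger than anything the paper proves at a fixed scale: even after Proposition \ref{prop.partialcohen}, the graph is only shown to agree with a coset off $2\eta N$ points.

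\textbf{The missing idea is maximality of a dissociated-type tuple.} This is what the paper uses, following Konyagin and Shkredov. It controls every $x$ at once and needs no linear structure.

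Lemma \ref{lem.ks} is a Rudin-type bound $\|T\|_{\ell_2(A)\rightarrow L_{2k}(\mu)}=O(\sqrt{k})$ with $k=\lceil\log 2N\rceil$. It is dualised against $\sum_{a\in\T_N}\lambda_a$, whose $L_1(\mu)$-norm is an average of $\|\exp(-2\pi iz\phi)\|_{A(\T_N)}$ over $|z|\leq\frac12\eta^{-1}$. The lemma shows the following. Suppose $a\in\T_N^d$ admits no nonzero $\sigma\in\{-1,0,1\}^d$ with $\sigma\cdot a=0$ and $\|\sigma\cdot\phi(a)\|_\T\leq\eta$. Then $d\ll\log 2N\cdot\sup_{|z|\leq\frac12\eta^{-1}}\|\exp(-2\pi iz\phi)\|_{A(\T_N)}^2$.

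Fix $\eta:=N^{-2d_*-2}$ in advance. The hypothesis then closes this up to $d\leq d_*=O_{C,c}(\log^{\frac{1+2c}{1-2c}}2N)$. For a maximal such $a$, every $x\in\T_N$ has some $\sigma^x$ with $x=\sigma^x\cdot a$ and $\|\phi(x)-\sigma^x\cdot\phi(a)\|_\T\leq\eta$. Apply Dirichlet's theorem (Theorem \ref{thm.dat}) with $K=N^2$ to the $d$ numbers $\phi(a_i)$. This gives $Q\leq N^{2d}\leq N^{2d_*}$, so $\eta\leq 1/(N^2Q)$. Then $\phi^*(x):=\sigma^x\cdot\wt{\phi}(a)$ is within $(d+1)/(N^2Q)=O_{C,c}(1/(NQ))$ of $\phi(x)$ for every $x$.

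The a priori bound $d\leq d_*$ is what breaks the circularity between accuracy and $Q$. The exponent $1+\frac{1+2c}{1-2c}=\frac{2}{1-2c}$ comes from $Q\leq N^{2d_*}=\exp(2d_*\log N)$. Balog--Szemer\'edi--Gowers and Freiman enter the paper only later, in Proposition \ref{prop.partialcohen}, applied to the graph rather than the spectrum.
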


\begin{proposition}[Proposition \ref{prop.partialcohen}]
Suppose that $G$ is a finite Abelian group, $1_A \in A(G)$ has $\|1_A\|_{A(G)} \leq M$ for some $M\geq 1$, and $\eta \in (0,1/4]$. Then there is $V \leq G$ with $\|1_A \ast \mu_V\|_{L_\infty(G)} \geq 1-\eta$ and $|V| \geq \exp(-\refconstbig{partial}M^{3}\eta^{-3}\log^52M\eta^{-1})|A|$.
\end{proposition}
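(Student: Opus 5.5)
Our approach is to derive the statement from a quantitative idempotent (Green–Sanders style) argument. We aim for a single subgroup on which $A$ has density close to $1$, rather than a full coset-ring decomposition. The plan has three steps: find a translate of a large Bohr set on which $1_A$ has very high density, pass from the Bohr set to a subgroup, and check that the losses fit the bound.

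\textbf{Step 1 (density increment on a Bohr set).} Since $\|1_A\|_{A(G)}\le M$, we have $\|1_A\|_{L_2}^2=\mu_G(A)\le \|1_A\|_{A(G)}\|1_A\|_{L_\infty}$, and $\wh{1_A}$ has $\ell_1$-mass $M$. We look for a set of characters $\Delta$ of size $k=O(M^2\eta^{-2}\log(M\eta^{-1}))$ such that $1_A$ is within $\eta/4$ in $L_\infty$ of its convolution with a Bohr-set measure $\mu_B$, $B=\Bohr(\Delta,\rho)$.
\begin{enumerate}
\item Using $1_A=1_A^2$, write $1_A=1_A\ast\mu_A\cdot\mu_G(A)^{-1}\ast\dots$; more concretely, apply Chang/Croot–Sisask almost-periodicity to $1_A\ast 1_A$ in $L_\infty$, relative to $\wh{1_A}\in\ell_1$.
\item Sample characters with probability proportional to $|\wh{1_A}(\gamma)|$. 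This gives a trigonometric polynomial $p$ with $O(M^2\eta^{-2})$ terms and $\|1_A-p\|_\infty\le\eta/8$.
\item Since $1_A$ takes values in $\{0,1\}$ and $p$ is uniformly close to it, there is a point $x_0$ with $p(x_0)\approx 1$. Moreover $p$ is nearly constant on $x_0+B$ for $B=\Bohr(\supp\wh p,\eta/(CM))$, because each character moves by at most $\rho$ there and the total coefficient mass is $M$.
\item Hence $1_A\ast\mu_B(x_0)\ge 1-\eta/2$.
\end{enumerate}
The Bohr set has size at least $(\rho/4)^{k}|G|\ge \exp(-O(M^2\eta^{-2}\log^2(M\eta^{-1})))|A|$. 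To relate this to $|A|$ rather than $|G|$, we use $|A|\le|G|$.

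\textbf{Step 2 (from Bohr set to subgroup).} This is the main obstacle. In a general finite Abelian group a Bohr set need not contain a large subgroup; in $\Z_p$, for example, it contains none. The statement nonetheless asks for $V\le G$ with $\|1_A\ast\mu_V\|_\infty\ge1-\eta$. The key point is that high density is inherited: if $1_A\ast\mu_B(x_0)\ge 1-\eta/2$, then $A-x_0$ contains most of $B$. Since $A$ is an idempotent-type set, $A-x_0$ has small doubling relative to $B$. Here we use that $\|1_{A}\|_{A(G)}\le M$ forces the additive energy to be large, $E(A)\ge |A|^3/M^2$ by Cauchy–Schwarz on $\sum|\wh{1_A}|^4$.

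The plan is therefore to apply Balog–Szemerédi–Gowers and a Freiman-type theorem (the paper announces exactly these tools) to $A\cap(x_0+B)$. This yields a large coset progression $P+V$ inside $A-A$. We then run an iteration: the $\ell_1$ bound prevents $1_A$ from being close to $1$ on a long proper progression $P$ unless $P$ is essentially a subgroup. The reason is that $1_A\ast\mu_P$ nearly $1$ at $x_0$ while $1_A$ vanishes elsewhere forces Fourier mass at least $\gg\log|P|$ (Lebedev/McGehee–Pigno–Smith type), so $\log|P|\ll M$. This lets us discard $P$ and keep only the subgroup $V$.

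\textbf{Step 3 (bookkeeping).} The losses are a factor $\exp(-O(M^2\eta^{-2}\log^2))$ from Step 1 and $\exp(-O(M\eta^{-1}\log^{O(1)}))$ from the Freiman step with the quasi-polynomial bounds. Combining them via $\log(1/\text{density})$ gives the claimed $\exp(-C M^3\eta^{-3}\log^5(2M\eta^{-1}))|A|$. Throughout, the density of $A$ in $V+x$ for the best $x$ is at least $1-\eta$, as required. I expect the delicate part to be converting near-fullness on $x_0+B$ into near-fullness on $x_1+V$ without losing more than $\eta/2$, using averaging over translates in $P$.
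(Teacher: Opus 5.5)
There is a genuine gap, and it is in Step 1 rather than Step 2. Sampling characters with probability proportional to $|\wh{1_A}(\gamma)|$ gives a polynomial with $k=O(M^2\eta^{-2})$ terms and coefficient mass $M$. That polynomial approximates $1_A$ in $L_2(\mu_G)$ (or $L_q$), but not in $L_\infty$. An $L_\infty$ bound needs a union bound over $G$, which costs a factor $\log|G|$, and the statement cannot afford any dependence on $|G|$.

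The uniform claim is in fact false. Take $G=\Z_q$ with $q$ prime and $A=\{0\}$; then $\wh{1_A}\equiv q^{-1}$, so $M=1$. Suppose $\sum_{j\leq k}c_j\exp(2\pi i r_jx/q)$ has $\sum_j|c_j|\leq 1$ and $q>100^k$. Pigeonhole gives $x\neq 0$ with $\|r_jx/q\|_\T\leq\frac{1}{100}$ for all $j$. So the polynomial changes by at most $2\pi/100$ between $0$ and $x$, while $1_A$ changes by $1$. (Had Step 1 been true you would not need Step 2 at all: $1_A$ would be exactly $B$-invariant and $V=\langle B\rangle$ would do.)

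Conversely, $L_p(\mu_G)$ control says nothing about sparse $A$: when $\mu_G(A)\leq\eta^2/64$ the zero polynomial already meets your $L_2$ bound. So you must localise, and that is what Balog--Szemer\'edi--Gowers and Freiman do in the paper, \emph{before} any almost-periodicity.
Your energy bound gives $S\subset A$ with $|S|\geq M^{-O(1)}|A|$ and doubling $M^{O(1)}$. Proposition \ref{prop.frei} then gives a Bohr system $B$ with the following properties:
\begin{itemize}
\item its dimension is $O(\log^32M(\log\log 3M)^4)$;
\item $\mathcal{C}^\Delta(S;B_1)$ is controlled;
\item $\|1_S\ast\beta\|_\infty\geq\alpha_M=(2M)^{-O(\log\log3M)}$ for every probability measure $\beta$ on $B_1$.
\end{itemize}
Proposition \ref{prop.cts} with $\delta=\frac{\eta}{4M}$, $\kappa=\eta/8$ and $p\approx\log\alpha_M^{-1}$ then yields $\mu,\nu$ such that $1_A\ast\mu$ moves by at most $\eta/4$ under translation by $B'_\kappa$, and $\|1_A-1_A\ast\mu\|_{L_p(\tau_x(\nu))}\leq\eta/4$ for every $x$. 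The $\delta^{-3}$ there is the source of $M^3\eta^{-3}$; your Step 3 does not actually derive this exponent.

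Your Step 2 therefore addresses a non-obstacle. You never need a subgroup \emph{inside} a Bohr set. The paper takes $V=\langle B'_\kappa\rangle$, the subgroup \emph{generated} by $B'_\kappa$, so $|V|\geq|B'_\kappa|\geq 2^{-\lceil\log_2\kappa^{-1}\rceil\dim^*B'}|B'_1|$ for free.

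What makes this legitimate is a gap argument:
\begin{itemize}
\item Since $1_A$ is $\{0,1\}$-valued, the localised $L_p$ estimate and the near-invariance force $1_A\ast\mu(x)$ to lie within $\eta$ of $0$ or of $1$ for every $x$.
\item As $1_A\ast\mu$ moves by at most $\eta/4$ under $B'_\kappa$ and $\frac{5}{4}\eta<1-\eta$, the set $T:=\{x:|1-1_A\ast\mu(x)|\leq\eta\}$ satisfies $T-B'_\kappa=T$, hence $T+V=T$.
\item The density $\alpha_M$ (applied with $\beta=\wt{\nu}$) and the choice of $p$ show $T\neq\emptyset$.
\item Averaging $1_A\ast\mu$ over a coset $x_0+V\subset T$ gives $\|1_A\ast\mu_V\|_{L_\infty(G)}\geq 1-\eta$.
\end{itemize}

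Your proposed alternative is a coset progression $P+V\subset A-A$ combined with a McGehee--Pigno--Smith-type bound $\log|P|\ll M$. It is not an argument as it stands. Such a bound would need $1_A$ to vanish off $x_0+P$, which you do not know, and it is essentially a localised idempotent theorem in its own right. Moreover, the Freiman step only gives $A$ density about $\alpha_M$ on a translate of the structured set. Nothing in your Step 2 upgrades that to density $1-\eta$ on a coset of $V$.
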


\begin{lemma}[Lemma \ref{lem.limit}]
Suppose that $\phi:\T \rightarrow \T$ is continuous and for all $\eta \in (0,1]$ and $N_0\in \N^*$ there is an integer $N \geq N_0$, $\phi^*:\T_N \rightarrow \T$ with graph $\Gamma:=\{(x,\phi^*(x)): x \in\T_N\}$, and $W$ such that
\begin{enumerate}[label=(\roman*)]
\item  $\|\phi(x)-\phi^*(x)\|_\T \leq \eta$ for all $x \in \T_N$;
\item  $W$ is a coset of a finite subgroup of $\T^2$ such that $|\Gamma \triangle W| \leq 2\eta N$.
\end{enumerate}
Then there is $t\in \T$ and $w \in \Z$ such that $\phi(x)=wx+t$ for all $x \in \T$.
\end{lemma}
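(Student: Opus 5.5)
The plan is to use the structure of cosets to show that, at each scale, $\phi^*$ agrees with an affine map of $\T_N$ outside a set of at most $2\eta N$ points. I would then pass to a limit through a Cauchy-type functional equation for $\phi$, which avoids any need to control the integer $w$ as $N$ varies.

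\emph{Step 1: identify $W$.} Write $W=u+H$ with $H\leq \T^2$ finite, and let $\pi_1:\T^2\rightarrow \T$ be the first-coordinate projection. Since $|\Gamma|=N$ and $|\Gamma\triangle W|\leq 2\eta N$, we have $|W\cap\Gamma|\geq (1-2\eta)N$ and $|H|=|W|\leq (1+2\eta)N$.
\begin{itemize}
\item $\Gamma$ is a graph, so $W\cap\Gamma$ meets each fibre of $\pi_1$ at most once. Hence $|W\cap \Gamma|\leq |\pi_1(H)|=|H|/|\ker \pi_1|_H|$. For $\eta<1/8$ this forces $\ker\pi_1|_H$ to be trivial, since otherwise $(1-2\eta)N\leq (1+2\eta)N/2$.
\item $\pi_1(u)+\pi_1(H)$ contains $\pi_1(W\cap\Gamma)\subset \T_N$, a set of size at least $(1-2\eta)N$. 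In particular $\pi_1(u)+\pi_1(H)$ meets $\T_N$, so $\pi_1(u)\in \T_N+\pi_1(H)$ and we may replace $u$ by a point of $W$ with $\pi_1(u)\in\T_N$.
\item Then $\pi_1(u)+\pi_1(H)$ meets $\T_N$ in a coset of $\pi_1(H)\cap \T_N$. This has size $\gcd(|H|,N)\geq (1-2\eta)N>N/2$, so $N\mid |H|$. Together with $|H|\leq (1+2\eta)N<2N$ this gives $|H|=N$ and $\pi_1(H)=\T_N$.
\end{itemize}
Therefore $H$ is the graph of a homomorphism $\T_N\rightarrow \T$, that is $x\mapsto wx$ for some $w\in\Z$, and $W=\{(x,wx+t):x\in\T_N\}$ for some $t\in\T$.

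\emph{Step 2: a discrete approximate Cauchy equation.} Let $B\subset \T_N$ be the set of $x$ with $(x,\phi^*(x))\notin W$. Then $|B|\leq 2\eta N$, and for $x\notin B$ condition (i) gives $\|\phi(x)-wx-t\|_\T\leq \eta$. Consequently, for $x,y\in\T_N$ with $x,y,x+y\notin B$,
\begin{equation*}
\|\phi(x+y)-\phi(x)-\phi(y)+t\|_\T\leq 3\eta .
\end{equation*}
For each fixed $x$, the values of $y$ for which some of $x,y,x+y$ lies in $B$ number at most $4\eta N$. So all but a proportion $6\eta$ of pairs $(x,y)\in\T_N^2$ satisfy this inequality. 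Note that the integer $w$ has disappeared from the estimate.

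\emph{Step 3: the limit.} Let $\omega$ be the modulus of continuity of $\phi$, and fix $a,b\in\T$ and $\delta>0$. Take $\eta\rightarrow 0$ and $N_0\rightarrow\infty$, and pass to a subsequence along which $t\rightarrow t_\infty$, using compactness of $\T$. The square of side $\delta$ around $(a,b)$ contains about $\delta^2N^2$ points of $\T_N^2$. Once $6\eta<\delta^2/2$, some such point $(x,y)$ is good, and then
\begin{equation*}
\|\phi(a+b)-\phi(a)-\phi(b)+t\|_\T\leq 3\eta+4\omega(2\delta).
\end{equation*}
Letting $\eta\rightarrow0$ and then $\delta\rightarrow 0$ gives $\phi(a+b)=\phi(a)+\phi(b)-t_\infty$ for all $a,b\in\T$. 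Setting $\psi:=\phi-t_\infty$ yields $\psi(a+b)=\psi(a)+\psi(b)$, so $\psi$ is a continuous homomorphism $\T\rightarrow\T$. Hence $\psi(x)=wx$ for some $w\in\Z$, and $\phi(x)=wx+t_\infty$ as required.

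I expect the main obstacle to be Step 1. The counting must rule out both a nontrivial kernel of $\pi_1$ on $H$ and the possibility that $\pi_1(H)$ is a subgroup of $\T$ other than $\T_N$. The $\gcd$ argument above is what I would try first, with the constraint $\eta<1/8$ absorbed by taking $\eta$ small. Steps 2 and 3 are routine once one avoids trying to track $w$ across scales and instead passes to the additive functional equation.
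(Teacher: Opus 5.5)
Your proof is correct, and it takes a genuinely different route from the paper's.

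\textbf{The paper's route.} The paper never identifies $W$. It works with the second difference $\Phi(x,y,z)=\exp(2\pi i(\phi(x+y+z)-\phi(x+y)-\phi(x+z)+\phi(x)))$. It notes that $\Phi^*(x,y,z)\neq 1$ exactly when the parallelogram spanned by the points of $\Gamma$ over $x$, $x+y$, $x+z$ fails to close up in $\Gamma$. It then uses only the inequality $|W\cap\Gamma-W\cap\Gamma|\leq|W|$, together with Cauchy--Schwarz, to show that at most $O(\eta N^3)$ triples in $\T_N^3$ are bad. The conclusion follows by contradiction, comparing $\int|\Phi-1|\dd\mu_{\T_N^3}$ with $\int|\Phi-1|\dd\mu_{\T^3}$ via weak convergence. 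Because $\Phi$ annihilates affine maps, neither $w$ nor $t$ ever appears and no compactness is needed.

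\textbf{Your route.} You instead pin down $W$ exactly as the graph of $x\mapsto wx+t$ over $\T_N$. This uses the trivial kernel of $\pi_1$ on $H$ for $\eta<1/6$, then the $\gcd$ divisibility argument forcing $|H|=N$; that argument works for arbitrary $N$, not just primes. You then use a first-order Cauchy equation in which $w$ cancels but $t$ survives. So you need the subsequence $t\to t_\infty$, which is harmless: setting $a=b=0$ shows $t_\infty=\phi(0)$ in any case. In place of the integral comparison you use a pointwise pigeonhole on a $\delta$-square.

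\textbf{What each buys.} Your approach yields more explicit structure: off a set of at most $2\eta N$ points, $\phi^*$ is exactly affine. It also proves the conclusion directly rather than by contradiction. The paper's approach is shorter and needs no analysis of finite subgroups of $\T^2$, since it exploits the coset structure only through the single difference-set bound.

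\textbf{A small slip in Step 2.} The bound of $4\eta N$ bad values of $y$ holds only for fixed $x\notin B$. For $x\in B$ every $y$ is bad, and these pairs contribute a further $2\eta N^2$. That is exactly how the total $6\eta N^2$ you state arises, so the conclusion stands.
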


\begin{proof}[Proof of Theorem \ref{thm.main}]
Suppose that $c:=\frac{1}{8}-\varepsilon>0$. From the hypothesis there is $C \geq 1$ such that
\begin{equation}\label{eqn.phibasic2}
\|\exp(-2\pi i z \phi)\|_{A(\T)} \leq C\log^{c} (3+|z|) \text{ for all }z \in \Z.
\end{equation}
Let
\begin{equation*}
F:=F(C,c) \text{ and }H=H(C,c)
\end{equation*}
where these are the constants as in Proposition \ref{prop.datks}.

$\T_N$ is a closed subgroup of $\T$ and so $A(\T_N)$ and $A(\T)$ are well-defined subalgebras of $B(\T_N)$ and $B(\T)$ respectively, and the restriction map $B(\T) \rightarrow B(\T_N); f \mapsto f|_{\T_N}$ is a contractive surjection (this is essentially the proof of \cite[Theorem 2.7.2, p53]{rud::1}). Moreover, the indicator functions of closed subgroups have algebra norm $1$. Combining all these we get that
\begin{align}
\nonumber \|\exp(-2\pi i z \phi)\|_{A(\T_N)}& \nonumber =\|\exp(-2\pi iz\phi)\|_{B(\T_N)}\\ \nonumber &\leq \|\exp(-2\pi i z \phi)1_{\T_N}\|_{B(\T)}\\ \label{eqn.alg} &\leq \|\exp(-2\pi i z \phi)\|_{B(\T)}\|1_{\T_N}\|_{B(\T)}  = \|\exp(-2\pi i z \phi)\|_{A(\T)}
\end{align}
for all $z \in \Z$.

Our aim now is to show that the hypothesis of Lemma \ref{lem.limit} are satisfied. It is enough to establish these for all $\eta$ sufficiently small so let $\eta \in (0,1/4]$. Since there are infinitely many primes we may let $N \geq N_0$ be prime such that
\begin{equation}\label{eqn.vbound}
\frac{H}{N} \leq \eta \text{ and } \exp(-\refconstbig{partial}M^{3}\eta^{-3}\log^52M\eta^{-1})>N^{-1} =\exp(-\log N)
\end{equation}
where
\begin{equation}
Q_0:=\exp(F\log^{\frac{2}{1-2c}}2N) \text{ and }M:=C\log^c(3+Q_0) + 2\pi H.
\end{equation}
This is possible since $c<\frac{1}{8}$, because this ensures $3\cdot c\cdot \frac{2}{1-2c}<1$.

Apply Proposition \ref{prop.datks} to get $1 \leq Q \leq Q_0$ and $\phi^*:\T_N \rightarrow \T_Q$ such that
\begin{equation}\label{eqn.efd}
\|\phi(x)-\phi^*(x)\|_\T \leq H/NQ \leq \eta \text{ for all }x \in \T_N.
\end{equation}
Let $\Gamma$ be the graph of $\phi^*$ \emph{i.e.\ }$\Gamma=\{(x,\phi^*(x)): x \in \T_N\} \subset \T_N \times \T_Q$, and for $s \in \Z_Q$ write $s=\tilde{s}+Q\Z$ for some $\tilde{s} \in \{1,\dots,Q\}$. Writing $f_s:=\exp(-2\pi i \wt{s}\phi)|_{\T_N}$, we have from the first inequality in (\ref{eqn.efd}) that
\begin{align*}
&\left|\wh{1_\Gamma}(r,s) -\frac{1}{Q}\wh{f_s}(r)\right|\\ & \qquad \qquad = \left|\frac{1}{NQ}\sum_{x \in \T_N}{\exp(-2\pi i xr)\exp(-2\pi i \wt{s}\phi(x))\left(\exp(-2\pi i \wt{s}(\phi^*(x)-\phi(x)))-1\right)}\right|\\ & \qquad \qquad \leq \frac{1}{NQ}\cdot N \cdot 2\pi \cdot \sup{\{\wt{s}: s \in \Z_Q\}}\cdot \frac{H}{NQ} = \frac{2\pi H}{NQ}.
\end{align*}
Hence
\begin{align*}
\|1_\Gamma\|_{A(\T_N \times \T_Q)} & \leq \sum_{s\in \Z_Q}{\frac{1}{Q}\|f_s\|_{A(\T_N)}} +2\pi H\\
& \leq \sup{\{\|\exp(-2\pi \wt{s}\phi)\|_{A(\T_N)}: s \in \Z_Q\}} +2\pi H \leq M.
\end{align*}
Apply Proposition \ref{prop.partialcohen} with $M$, $\eta$, the set $\Gamma$, and the group $\T_N \times \T_Q$ to get that there is $V \leq \T_N \times \T_Q$ such that $\|1_\Gamma \ast \mu_V\|_\infty \geq 1-\eta$ and
\begin{equation*}
|V| \geq \exp(-\refconstbig{partial}M^{3}\eta^{-3}\log^52M\eta^{-1})|\Gamma|>1
\end{equation*}
Let $x_0 \in \T_N \times \T_Q$ be such that $1_\Gamma \ast \mu_V(x_0)\geq 1-\eta$. Let $\pi:\T_N \times \T_Q \rightarrow \T_N$ be the canonical projection. Since $\Gamma$ is the graph of a function we know that $\pi$ is injective on $\Gamma$. Moreover $|\Gamma\cap (x_0+V)| = 1_\Gamma \ast \mu_V(x_0)|V|$, and $|V|\geq 2$ so
\begin{equation*}
|\pi(x_0+V)| \geq |\pi(\Gamma \cap (x_0+V))| \geq (1-\eta)|V| >1.
\end{equation*}
Now $\pi(x_0+V)$ is a coset of a subgroup of $\T_N$ and $N$ is prime, so $\pi(x_0+V)=\T_N$ and hence $|V|=|x_0+V| \geq N$. In the other direction $\frac{|\Gamma|}{|V|} \geq 1-\eta$ and hence $|V| \leq |\Gamma|/(1-\eta) < 2N$. Since $V$ has $\T_N$ as a quotient and $N$ is prime it follows by Lagrange's theorem that $|V|=N$.

Finally, write $W:=x_0+V$ so that
\begin{equation*}
|\Gamma \triangle W| =|\Gamma|+|W| - 2|\Gamma \cap (x_0+V)| \leq 2\eta |\Gamma|.
\end{equation*}
Between this and (\ref{eqn.efd}) the conditions of Lemma \ref{lem.limit} are established and this gives the result.
\end{proof}

The $\eta$-dependence in Proposition \ref{prop.partialcohen} is not important to us, but the $M$-dependence determines the quality of the bound in Theorem \ref{thm.main}. If $G$ is a cyclic group of prime order much larger than $\exp(M)$ and $A$ is an interval in $G$ with $\|1_A\|_{A(G)}=M$ then $|A| = \exp(\Omega(M))$, while we must have $|V| \leq 1$. It follows that in the lower bound the $M^3$-term cannot be improved to a smaller power than $M^1$. If a bound of that shape were known the remainder of the argument would give Theorem \ref{thm.main} with the $\frac{1}{8}$ replaced by $\frac{1}{4}$.

\section{Dirichlet's approximation theorem}

The elements of $\T=\R/\Z$ are cosets of $\Z$, and for $x \in \T$ we write $\|x\|_\T$ for $\min\{|t|: t \in x\}$.
\begin{theorem}[Dirichlet's approximation theorem]\label{thm.dat}
Suppose that $A$ is a set of size $d$, $\phi$ maps into $\T$ and has domain containing $A$, and $K \in \N^*$. Then there is an integer $1 \leq Q \leq K^d$ and $\phi^*:A \rightarrow \T_Q$ such that $\|\phi(x)-\phi^*(x)\|_\T \leq \frac{1}{KQ}$ for all $x \in A$.
\end{theorem}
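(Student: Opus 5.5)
We prove this by the classical pigeonhole argument applied to the torus $\T^d$. Enumerate $A=\{a_1,\dots,a_d\}$ and consider the vector $v:=(\phi(a_1),\dots,\phi(a_d)) \in \T^d$. Partition $\T^d$ into the $K^d$ half-open boxes $\prod_{i=1}^d[j_i/K,(j_i+1)/K)$ with $0 \leq j_i <K$. We look at the $K^d+1$ points $0\cdot v, 1\cdot v, \dots, K^d\cdot v$ in $\T^d$. By pigeonhole two of them, say $q_1 v$ and $q_2v$ with $0 \leq q_1<q_2\leq K^d$, lie in the same box. Hence $Q:=q_2-q_1$ satisfies $1 \leq Q \leq K^d$ and $\|Q\phi(a_i)\|_\T < 1/K$ for every $i$. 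This uses that coordinates in the same box differ by less than $1/K$ as real representatives, so their difference in $\T$ has norm less than $1/K$.

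Next we construct $\phi^*$. For each $x\in A$, choose $p_x\in\R$ representing $Q\phi(x)$ with $|p_x-m_x|=\|Q\phi(x)\|_\T$ for some integer $m_x$. Set $\phi^*(x):=m_x/Q+\Z\in\T_Q$. Then $Q(\phi(x)-\phi^*(x))$ is represented by $p_x-m_x$, which has absolute value less than $1/K$. Dividing by $Q$ requires care, since division is not well defined on $\T$. To handle this, lift $\phi(x)$ to a real $t_x$ with $Qt_x = p_x + k$ for an integer $k$, and absorb $k$ into $m_x$ so that $|Qt_x - m_x| <1/K$. Then $|t_x-m_x/Q|<1/(KQ)$, giving $\|\phi(x)-\phi^*(x)\|_\T\leq 1/(KQ)$.

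No step here is a genuine obstacle. The only point needing attention is this lifting bookkeeping: we work with real representatives so that dividing by $Q$ is legitimate, and we ensure $\phi^*$ takes values in $\T_Q=\{m/Q+\Z\}$. The bound $Q\le K^d$ comes directly from the number of boxes, and the nonstrict inequality in the statement is implied by the strict one we obtain.
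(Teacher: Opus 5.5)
Your pigeonhole argument is correct. It is the standard proof of this classical result, which the paper quotes without proof, so there is no competing argument to compare with.

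The only point to tidy is the order of definitions. Fix a real lift $t_x$ of $\phi(x)$ first, take $m_x$ to be the integer nearest $Qt_x$, and only then set $\phi^*(x):=m_x/Q+\Z$. If you instead define $\phi^*$ and afterwards shift $m_x$ by an integer $k$, this moves $\phi^*(x)$ by $k/Q$, so the absorption must happen before the definition.
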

Lebedev \cite[p124]{Lebedev:2012aa} used this in the case $A=\T_N$ and $K=N$, where $Q$ may be as large as $N^N$, and a key advance of Konyagin and Shkredov in \cite{konshk::0} was to show that in the setup of interest to us, one can do much better. 
\begin{proposition}\label{prop.datks}
Suppose that $C \geq 1$ and $c \in (0,1/2)$. Then there are constants $F(C,c), H(C,c)\geq 1 $ such that if $N \in \N^*$ and $\phi:\T \rightarrow \T$ is continuous with
\begin{equation}\label{eqn.bhhyp}
\|\exp(-2\pi i z \phi)\|_{A(\T_N)} \leq C\log^c(3+|z|) \text{ for all }z \in \Z.
\end{equation}
Then there is an integer $1 \leq Q \leq \exp(F(C,c)\log^{\frac{2}{1-2c}}2N)$ and $\phi^*:\T_N \rightarrow \T_Q$ such that $\|\phi(x)-\phi^*(x)\|_\T\leq \frac{H(C,c)}{NQ}$ for all $x \in \T_N$.
\end{proposition}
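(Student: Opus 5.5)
The plan is to replace the crude pigeonhole behind Theorem \ref{thm.dat}, which ranges over all vectors $(\phi(x))_{x \in \T_N}$ and costs $K^N$, by a pigeonhole over \emph{Fourier-sparse approximations} of the characters $f_z:=\exp(-2\pi i z\phi)|_{\T_N}$. These have small algebra norm by (\ref{eqn.bhhyp}). The key identity is $f_{z_1}\overline{f_{z_2}}=f_{z_1-z_2}$: if two of the $f_z$ are close to each other, then $f_q$ is close to $1$ for $q=z_2-z_1$, and that says $q\phi(x)$ is near $0$ in $\T$.

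First I fix a range $1\le z\le Z$, with $Z$ to be chosen, and put $L:=C\log^c(3+Z)$, so $\|f_z\|_{A(\T_N)}\le L$ for all these $z$. I use the standard Maurey/Carath\'eodory sampling (empirical approximation) argument. It writes $f_z$ as an average of at most $k:=O(L^2\delta^{-2})$ terms of the form $\|f_z\|_A\,\omega\,\gamma$, with $\gamma\in\Z_N$ and $\omega$ a root of unity from a fixed grid of size $O(\delta^{-1})$, and achieves error at most $\delta$ in $L_2(\mu_{\T_N})$. The norm $\|f_z\|_A$ itself is rounded to a grid. The number of possible approximants is then at most $\exp(O(k\log(N/\delta)))$. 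So if
\[
\log Z \ge C' L^2\delta^{-2}\log(2N/\delta) \asymp \log^{2c} Z\cdot\delta^{-2}\log 2N,
\]
pigeonhole gives $z_1<z_2\le Z$ with $\|f_{z_2}-f_{z_1}\|_{L_2}\le 2\delta$, i.e. $\|f_q-1\|_{L_2(\T_N)}\le 2\delta$ with $1\le q\le Z$. Solving for $Z$ gives $\log Z\asymp(\delta^{-2}\log 2N)^{1/(1-2c)}$. Taking $\delta^{-2}$ of order $\log 2N$ produces precisely the exponent $\frac{2}{1-2c}$ in the statement, which makes me fairly confident this is the intended shape.

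The second step upgrades this $L_2$ closeness to the required pointwise bound $\|\phi(x)-\phi^*(x)\|_\T\le H/(NQ)$. From $\|f_q-1\|_{L_2}\le 2\delta$, all but $O(\delta^2 N/\rho^2)$ points $x$ satisfy $\|q\phi(x)\|_\T\le\rho$. Two issues remain: the accuracy $\rho$ must be pushed down to $O(1/N)$, and the exceptional points must be dealt with. I would handle both by iterating over dyadic scales. Replace $\phi$ by $\phi_1:=q\phi$, which still satisfies a hypothesis of the form (\ref{eqn.bhhyp}) with $Z$ replaced by $Z/q$, and repeat the argument at the next scale. The denominators multiply, and because $\sum\log Z_j$ is a geometric-type sum it stays $O(\log^{2/(1-2c)}2N)$. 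At the final stage I expect an exceptional set $A$ of size $d$, polylogarithmic in $N$. On $A$ I apply Theorem \ref{thm.dat} with $K=N$, which costs a further factor $N^d=\exp(d\log N)$; this is admissible once $d\lesssim\log^{2/(1-2c)-1}2N$. Finally, $Q$ is the product of all the denominators, and $\phi^*(x)$ is the corresponding rational with denominator dividing $Q$; common denominators are handled by passing from $\T_{q}$ to $\T_{Q}$ through the inclusion.

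The main obstacle is this second step: converting an $L_2$ (measure-theoretic) statement about $f_q$ into a \emph{uniform} bound of size $H/(NQ)$ at \emph{every} point of $\T_N$, while keeping the total denominator within $\exp(F\log^{\frac{2}{1-2c}}2N)$. My first attempt will be to exploit the fact that the sup norm is controlled by the algebra norm on the multiples $f_{mq}$, $m\le N$: these are all bounded in $A(\T_N)$ by $C\log^c(3+Nq)$. If $\|f_{mq}-1\|_{L_2}$ is small for all $m$ up to about $N/H$, then each $q\phi(x)$ lies within $O(H/N)$ of $0$ except on a small set, and only that residual set needs the Dirichlet fallback. If this proves too lossy, the fallback is the iteration above with $\delta$ shrinking geometrically.
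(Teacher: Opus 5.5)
Your first step is sound: sampling plus pigeonhole, together with $f_{z_2}\overline{f_{z_1}}=f_{z_2-z_1}$, does give $1\le q\le Z$ with $\|f_q-1\|_{L_2(\mu_{\T_N})}\le 2\delta$, where $\log Z$ is of order $(\delta^{-2}\log 2N)^{\frac{1}{1-2c}}$. The gap is that this is much too weak for the proposition. Its conclusion is equivalent to $\|Q\phi(x)\|_\T\le H/N$ for \emph{every} $x\in\T_N$. With $\delta^{-2}$ of order $\log 2N$ you only know $\|q\phi(x)\|_\T\le\rho$ off a set of $O(N/(\rho^2\log 2N))$ points. An $L_2(\mu_{\T_N})$ bound controls a single point only through $|f_q(x)-1|\le\sqrt{N}\|f_q-1\|_{L_2(\mu_{\T_N})}$. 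So reaching precision $1/N$ everywhere this way needs $\delta=O(N^{-3/2})$, or $\delta=O(1/N)$ with $L_\infty$-sampling, where the number of samples becomes of order $N^2\log N$. Either way $\log Z$ is polynomial in $N$. The matching exponent $\frac{2}{1-2c}$ is therefore no evidence for the route: all the content is in your second step, and none of the mechanisms you offer there works.
\begin{enumerate*}[label=(\roman*)]
\item Replacing $\phi$ by $q\phi$ does not refine the scale. The hypothesis only concerns integer frequencies, so it cannot zoom in on the residues of $q\phi(x)$ near $0$. A new $q_2$ is again only good at scale $\delta$, and $\|q_2q\phi(x)\|_\T\le q_2\|q\phi(x)\|_\T$ degrades what you already had.
\item Smallness of $\|f_{mq}-1\|_{L_2}$ for all $m\le N/H$ is not supplied by the pigeonhole; you get only $2m\delta$. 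Nothing in the argument produces a polylogarithmic exceptional set.
\item Denominators cannot be merged via $\T_q\subset\T_Q$, because the permitted error $H/(NQ)$ shrinks as $Q$ grows. With $Q=qQ_A$ and $Q_A\le N^d$ from Theorem \ref{thm.dat} on the exceptional set, on the other points you only know $\|Q\phi(x)\|_\T\le Q_A\|q\phi(x)\|_\T$, which loses the whole factor $Q_A$.
\end{enumerate*}

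The missing idea is a structural link between the points of $\T_N$, so that one Dirichlet approximation of a few values propagates to all of them with the \emph{same} denominator. The paper gets this from Lemma \ref{lem.ks}. If $a\in\T_N^d$ has no nonzero $\sigma\in\{-1,0,1\}^d$ with $\sigma\cdot a=0$ and $\|\sigma\cdot\phi(a)\|_\T\le\eta$, then $d=O(\log 2N\cdot\sup_{|z|\le\frac12\eta^{-1}}\|f_z\|^2_{A(\T_N)})$. This is proved by a Chang/Rudin-type $L_{2k}$ estimate with $k=\lceil\log 2N\rceil$. The right-hand side depends on $\eta$ only through $\log^{2c}\eta^{-1}$, so one can afford the tiny $\eta=N^{-2d_*-2}$ with $d_*$ of order $\log^{\frac{1+2c}{1-2c}}2N$. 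Take such an $a$ maximal. Then every $x\in\T_N$ equals $\sigma^x\cdot a$ with $\|\phi(x)-\sigma^x\cdot\phi(a)\|_\T\le\eta$. Theorem \ref{thm.dat} with $K=N^2$, applied to the $d\le d_*$ values $\phi(a_i)$, gives $Q\le N^{2d_*}=\exp(O(\log^{\frac{2}{1-2c}}2N))$. Setting $\phi^*(x):=\sigma^x\cdot\wt{\phi}(a)$ gives error at most $(d+1)/(N^2Q)$ at every point. There is no exceptional set. The condition $\eta\le 1/(N^2Q)$ is exactly what makes a single $Q$ compatible with precision $O(1/(NQ))$ everywhere.
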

Our argument follows that of Konyagin and Shkredov. We introduce a little more smoothing to get the linear dependence on $Q$ in the final estimate above. This is not needed by Konyagin and Shkredov (who made do with square-root dependence in the corresponding estimate \cite[(46), p120]{konshk::0}), but smoothing in this way is completely routine.

We also make the aesthetic choice to work with a version of dissociativity rather than the direct argument in \cite[Lemma 4, p112]{konshk::0}. This is both slightly weaker as we cannot use a result like \cite[Lemma 9, p113]{konshk::0}, which could lead to savings of a power of a double-logarithm; and is not as generally presented as  \cite[Lemma 4, p112]{konshk::0} which is well-suited to use in other contexts.

For $G$ an Abelian group, $a \in G^d$ and $\sigma \in \Z^d$ write $\sigma\cdot a:=\sigma_1a_1+\cdots + \sigma_d a_d$. For $\phi:G \rightarrow H$, a map between groups, we write $\phi(a)$ for the element of $H^d$ defined by $\phi(a)_i:=\phi(a_i)$ for $1 \leq i \leq d$.
\begin{lemma}\label{lem.ks}
Suppose that $\phi:\T_N \rightarrow \T$ and $a \in \T_N^d$ is such that if $\sigma \in \{-1,0,1\}^d$ and $\sigma \cdot a=0$ and $\|\sigma \cdot \phi(a)\|_\T \leq \eta$, then $\sigma \equiv 0$. Then 
\begin{equation*}
d \leq \newconstbig{sksize}\cdot (\log 2N) \cdot \sup_{|z|\leq \frac{1}{2}\eta^{-1}}{\|\exp(-2\pi i z\phi)\|_{A(\T_N)}^2} .
\end{equation*}
\end{lemma}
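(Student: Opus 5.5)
The plan is to view the points $p_i:=(a_i,\phi(a_i))\in \T_N\times\T$ as lying on the graph of $\phi$. The hypothesis then says they are dissociated in a smoothed sense: no nontrivial $\{-1,0,1\}$-combination of them lands within $\eta$ of the origin in the second coordinate while being exactly $0$ in the first. Write $M:=\sup_{|z|\le \frac12\eta^{-1}}\|\exp(-2\pi i z\phi)\|_{A(\T_N)}$. I would run a Chang/Rudin-type argument, with the graph playing the role of a function of small algebra norm and the $p_i$ playing the role of dissociated characters on the dual group $\Z_N\times\R$.

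\textbf{Step 1 (the test function).} Let $\psi$ be a smooth bump on $\R$ with $\wh\psi$ supported in $[-\frac12\eta^{-1},\frac12\eta^{-1}]$, $\psi\ge0$ and $\psi(0)=1$, essentially supported at scale $\eta$. Define $g(x,y):=\psi(y-\phi(x))$ on $\T_N\times\T$, a smoothed indicator of the graph. Expanding in $y$ gives $\wh g(r,z)=\wh\psi(z)\wh{f_z}(r)$ with $f_z=\exp(-2\pi i z\phi)$. Hence $\wh g$ is supported on $|z|\le\frac12\eta^{-1}$ and $\|g\|_{A}\lesssim M$, since $\int|\wh\psi|=O(1)$. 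Also $g(p_i)=1$ for every $i$.

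\textbf{Step 2 (Riesz product and subgaussian bound).} For $\lambda\in\C^d$ with $|\lambda_i|\le1$, consider the function on the dual $\Z_N\times\R$
\begin{equation*}
R_\lambda(r,z):=\prod_{i=1}^d\bigl(1+\Re(\lambda_i\, e(r a_i+z\phi(a_i)))\bigr)\geq 0 .
\end{equation*}
Expanding, $R_\lambda$ is a combination of the characters $(r,z)\mapsto e(r\,\sigma\cdot a+z\,\sigma\cdot\phi(a))$ with $\sigma\in\{-1,0,1\}^d$. I would average $R_\lambda$ against the measure $\omega:=(\text{uniform on }\Z_N)\otimes\wh\psi(z)\dd z$, suitably normalised. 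The uniform average in $r$ kills every term with $\sigma\cdot a\neq0$. For the remaining terms, the $z$-average equals $\psi(\sigma\cdot\phi(a))$, which vanishes whenever $\|\sigma\cdot\phi(a)\|_\T>\eta$ provided $\psi$ is chosen with support in $[-\eta,\eta]$. So only $\sigma=0$ survives and $\int R_\lambda\dd\omega=1$. The usual $1+t\le e^{t}\le (1+t)e^{t^2}$ manipulation then gives the subgaussian bound
\begin{equation*}
\int \exp\bigl(\Re\textstyle\sum_i\lambda_i e(\cdot)\bigr)\dd\omega\le e^{O(\sum|\lambda_i|^2)},
\end{equation*}
and hence Rudin's inequality $\|\sum_i c_i e(r a_i+z\phi(a_i))\|_{L_p(\omega)}\lesssim\sqrt p\,\|c\|_2$.

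\textbf{Step 3 (pairing).} Put $\nu:=\sum_i\delta_{p_i}$. Then
\begin{equation*}
d=\langle g,\nu\rangle=\sum_r\int \wh g(r,z)\,\overline{\wh\nu(r,z)}\dd z,
\end{equation*}
and $\wh g/\wh\psi=\wh{f_z}(r)$ lives on the support of $\omega$. H\"older with exponents $p'$ and $p$ gives $d\le \|\wh g/\wh\psi\|_{L_{p'}(\omega)}\,\|\wh\nu\|_{L_p(\omega)}$. The second factor is $\lesssim\sqrt{pd}$ by Step 2. For the first factor I would interpolate: the $\ell_1$-mass is at most $M$ (from the $A$-norm), and the sup is at most $1$, so the $L_{p'}(\omega)$ norm, measured against the total mass $N$ in $r$, is at most $M^{1/p'}N^{1/p}$ up to constants. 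Taking $p\approx\log 2N$ makes $N^{1/p}=O(1)$ and yields $d\lesssim M\sqrt{d\log 2N}$, that is $d\le C\, M^2\log 2N$.

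The main obstacle is the bookkeeping of normalisations in Step 3, namely making the $L_{p'}(\omega)$ norm of $\wh{f_z}(r)$ genuinely $O(M)$ once $p\sim\log N$. The smoothing must be arranged carefully: $\psi$ needs compact support in $[-\eta,\eta]$ for the dissociation in Step 2, while $\wh\psi$ needs to be (essentially) supported in $|z|\le\frac12\eta^{-1}$ so that only the frequencies covered by the hypothesis enter. I would first try a Fej\'er-type kernel, for which both properties hold exactly up to constants, and absorb the lost constant factors into $\refconstbig{sksize}$.
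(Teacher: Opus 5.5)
Your architecture matches the paper's. A Riesz-product subgaussian estimate gives Rudin's inequality with constant $O(\sqrt p)$, and then a H\"older/duality step with $p\approx\log 2N$ makes $N^{1/p}=O(1)$. Your pairing $d=\langle g,\nu\rangle$ is the orthogonality relation (\ref{eqn.ortho}) applied to $f=\sum_{x\in\T_N}\lambda_x$ in disguise, where $\lambda_a(r,z):=\exp(2\pi i(ar+\phi(a)z))$. Your $\ell_1$/$\ell_\infty$ interpolation of $\wh{f_z}$ is as good as the paper's $L_1$/$L_2$ log-convexity.

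However, the point you defer as ``bookkeeping'' is a genuine gap, and your proposed remedy fails because no single kernel has both properties. If $\wh\psi$ is supported in $|z|\le\frac12\eta^{-1}$, then $\psi$ is a nonzero trigonometric polynomial and cannot vanish on $\{t:\|t\|_\T>\eta\}$. For the Fej\'er kernel of that order, $\psi(t)/\psi(0)$ is about $4/\pi^2$ at $\|t\|_\T=\eta$. So in Step 2 the terms with $\sigma\cdot a=0$ and $\|\sigma\cdot\phi(a)\|_\T$ slightly above $\eta$ survive with non-negligible weight. The hypothesis says nothing about them, and $\int R_\lambda\dd\omega=1$ is lost. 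If instead $\psi$ is the triangle function, which makes Step 2 exact, then $\wh\psi(z)=\eta(\sin\pi\eta z/\pi\eta z)^2$ has full support on $\Z$. Its tail beyond $\frac12\eta^{-1}$ carries a constant proportion of the mass when $\eta$ is small. The $L_{p'}(\omega)$ factor in Step 3 then involves $\|\exp(-2\pi i z\phi)\|_{A(\T_N)}$ for $|z|>\frac12\eta^{-1}$, about which nothing is assumed.

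The missing idea is to use two measures linked by positivity, as the paper does.
\begin{itemize}
\item Run Step 2 exactly as you wrote it for the untruncated measure $\wt\mu$ on $\Z_N\times\Z$ with weight $\frac1N F(z)$, where $F(z):=\eta(\sin\pi\eta z/\pi\eta z)^2$. Its inverse transform in $z$ is the triangle function, supported in $[-\eta,\eta]$.
\item Let $\mu$ be the restriction of $\wt\mu$ to $|z|\le\frac12\eta^{-1}$, renormalised by $\tau:=\sum_{|z|\le\frac12\eta^{-1}}F(z)\ge 2/\pi^2$ (using $|\sin\theta|\ge\frac2\pi|\theta|$).
\item Since $\exp(\Re\sum_i c_i\lambda_{a_i})>0$, one has $\int\exp(\Re\sum_i c_i\lambda_{a_i})\dd\mu\le\tau^{-1}\int\exp(\Re\sum_i c_i\lambda_{a_i})\dd\wt\mu$. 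So the subgaussian bound and Rudin's inequality transfer to $\mu$ at the cost of an absolute constant.
\item Step 3 needs no Fourier property of the weight: $\int\lambda_a\overline{\lambda_b}\dd\mu=1_{\{a=b\}}$ holds for any probability weight in $z$. In your language, take $\wh\psi$ to be the truncated $F$, so that $g(p_i)=\psi(0)=\tau$.
\end{itemize}
Now only $|z|\le\frac12\eta^{-1}$ enters the $L_{p'}$ factor, giving $d=O(M^2\log 2N)$.

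Two minor slips. Since $\phi$ is $\T$-valued, $z$ must range over $\Z$ rather than $\R$, so $\psi$ lives on $\T$ and $\dd z$ is counting measure. Also $e^t\le(1+t)e^{t^2}$ fails near $t=-1$. Either take $|\lambda_i|\le\frac12$ and rescale, or use $\exp(ty)\le\cosh t\,(1+y\tanh t)$ as the paper does.
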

\begin{proof}
Define
\begin{equation*}
F:\Z \rightarrow \C; z \mapsto \begin{cases}\eta & \text{ if }z =0\\ \eta \left(\frac{\sin \pi \eta z}{\pi \eta z}\right)^2 & \text{ otherwise.}\end{cases}
\end{equation*}
Then $F(z) \geq 0$ and we can compute that
\begin{equation}\label{eqn.invF}
\sum_{z \in \Z}{F(z) \exp(2\pi i zx)}=1_{\left[-\frac{1}{2}\eta,\frac{1}{2}\eta\right]} \ast \mu_{\left[-\frac{1}{2}\eta,\frac{1}{2}\eta\right]}(\|x\|_\T) \leq 1_{[-\eta,\eta]}(\|x\|_\T).
\end{equation}
Using $|\sin \theta| \geq \frac{2}{\pi}|\theta|$ whenever $|\theta| \leq \frac{\pi}{2}$, we also have
\begin{equation*}
\tau:=\sum_{|z| \leq \frac{1}{2}\eta^{-1}}{F(z)} \geq \sum_{|z| \leq \frac{1}{2}\eta^{-1}}{\eta \left(\frac{2}{\pi}\right)^2} \geq \frac{2}{\pi^2}.
\end{equation*}
For $a \in \T_N$ define 
\begin{equation*}
\lambda_a:\Z_N \times \Z \rightarrow S^1; (r,z) \mapsto  \exp(2\pi i (ar+\phi(a)z)),
\end{equation*} and define two probability measures $\wt{\mu}$ and $\mu$ on $\Z_N \times \Z$ with
\begin{equation*}
\wt{\mu}(\{r,z\}) = \frac{1}{N} F(z) \text{ and }\mu(\{(r,z)\})=\begin{cases}\frac{1}{\tau N} \cdot F(z) & \text{ if } |z| \leq \frac{1}{2}\eta^{-1}\\ 0 & \text{otherwise.}\end{cases}
\end{equation*}
Then, for $a,b \in \T_N$ we have the orthogonality relations
\begin{equation}\label{eqn.ortho}
\int{\lambda_a\overline{\lambda_b}\dd \mu} = \begin{cases}1 & \text{ if }a=b\\ 0 & \text{ otherwise.}\end{cases}
\end{equation}
From the hypothesis of the lemma, if $\sigma\cdot a=0$ and $\|\sigma\cdot \phi(a)\|_\T \leq \eta$ for $\sigma \in \{-1,0,1\}^d$, then $\sigma\equiv 0$, we conclude that $a_1,\dots,a_d$ are distinct; write $A:=\{a_1,\dots,a_d\}$.

Let $k:=\lceil \log 2N\rceil$ -- we make this choice now to be clear about dependencies, but it arises from optimising (\ref{eqn.optimise}) -- and define a linear operator
\begin{equation*}
T:\ell_2(A) \rightarrow L_{2k}(\mu); g \mapsto \sum_{a \in A}{g(a)\lambda_{a}}.
\end{equation*}
First we compute the norm of $T$.  Suppose $g \in \ell_2(A)$. By convexity of $\exp(ty)$ we have $\exp(ty) \leq \cosh t (1+ y \tanh t)$ whenever $t \in \R$ and $-1 \leq y \leq 1$, and so
\begin{align}
\nonumber  \int{\prod_{j=1}^d{\exp(\Re (g(a_j)\lambda_{a_j}))}\dd\mu}& \leq \tau^{-1}\int{\prod_{j=1}^d{\exp(\Re (g(a_j)\lambda_{a_j}))}\dd\wt{\mu}}\\ \nonumber & \leq \tau^{-1}\int{\prod_{j=1}^d{\cosh|g(a_j)|} \left(1+ \frac{\Re(g(a_j)\lambda_{a_j})}{|g(a_j)|}\tanh |g(a_j)|\right)\dd\wt{\mu}}\\
\label{eqn.de1}& =  \tau^{-1}\prod_{j=1}^d{\cosh |g(a_j)|}\int{\prod_{j=1}^d{(1+\Re \omega_j\lambda_{a_j})}\dd\wt{\mu}}
\end{align}
with the convention, and respectively notation, that 
\begin{equation*}
\frac{g(a_j)}{|g(a_j)|}\tanh |g(a_j)| =0 \text{ if }g(a_j)=0;\text{ resp.\ }\omega_j:=\frac{g(a_j)}{|g(a_j)|}\tanh |g(a_j)|.
\end{equation*}
Since $\Re z = \frac{1}{2}(z+\overline{z})$ we have
\begin{equation*}
\int{\prod_{j=1}^d{(1+\Re \omega_j\lambda_{a_j})}\dd\wt{\mu}} =\int{\sum_{ \sigma \in \{-1,0,1\}^d}{\frac{1}{2^{\|\sigma\|_{\ell_1}}} \cdot \prod_{j: \sigma_j=1}{\omega_j} \cdot \overline{\prod_{j: \sigma_j=-1}{\omega_j}}\cdot \lambda_{\sigma\cdot a} \dd \wt{\mu}}}.
\end{equation*}
However, by (\ref{eqn.invF}),
\begin{align*}
\int{\lambda_{\sigma \cdot a}\dd\wt{\mu}} & = \frac{1}{N}\sum_{r \in \Z_N}{\exp(2\pi i (\sigma\cdot a) r)} \cdot \sum_{z \in \Z}{F(z)\exp(2\pi i (\sigma \cdot \phi(a) )z)}\\  & = 1_{\{0\}}(\sigma \cdot a) 1_{\left[-\frac{1}{2}\eta,\frac{1}{2}\eta\right]} \ast \mu_{\left[-\frac{1}{2}\eta,\frac{1}{2}\eta\right]}(\|\sigma \cdot \phi(a)\|_\T) \leq 1_{\{0\}}(\sigma \cdot a) 1_{[-\eta,\eta]}(\|\sigma\cdot \phi(a)\|_\T).
\end{align*}
By the hypothesis of the lemma if $\sigma\cdot a =0$ and $\|\sigma \cdot \phi(a)\|_\T\leq \eta$ for $\sigma \in \{-1,0,1\}^d$ then $\sigma\equiv 0$, so we conclude that
\begin{equation*}
\int{\prod_{j=1}^d{(1+\Re \omega_j\lambda_{a_j})}\dd\wt{\mu}} \leq 1.
\end{equation*}
Combining this with (\ref{eqn.de1}) we get
\begin{equation*}
\int{\prod_{j=1}^d{\exp(\Re (g(a_j)\lambda_{a_j}))}\dd\mu}\leq \tau^{-1}\prod_{j=1}^d{\cosh|g(a_j)|} \leq \tau^{-1}\exp(\|g\|_{\ell_2(A)}^2/2),
\end{equation*}
with the last inequality since $\cosh x \leq \exp(x^2/2)$. The further inequality $\frac{1}{(2k)!}x^{2k} \leq \frac{1}{2}(\exp(x)+\exp(-x))$ in the preceding and (applied to $g$ and $-g$) gives
\begin{equation*}
\left\|\Re\sum_{a \in A}{g(a)\lambda_a}\right\|_{L_{2k}(\mu)}^{2k}\leq  (2k)!\cdot \tau^{-1}\exp(\|g\|_{\ell_2(A)}^2/2) \text{ for all }g \in \ell_2(A).
\end{equation*}
To compute the norm of $T$ we rescale $g$ to that $\|g\|_{\ell_2(A)}=2\sqrt{k}$, and then get
\begin{align*}
\frac{\|Tg\|_{L_{2k}(\mu)}}{\|g\|_{\ell_2(A)}} &\leq \frac{1}{2\sqrt{k}}\left(\|\Re Tg\|_{L_{2k}(\mu)} +\|\Re Tig\|_{L_{2k}(\mu)}\right)\\ &\leq \frac{1}{\sqrt{k}}\cdot (2k)!^{1/2k}\cdot \tau^{-1}\exp(4k/4k)=O(\sqrt{k}).
\end{align*}
It follows that $\|T\|=O(\sqrt{k})$. 

The adjoint of $T$ is the map
\begin{equation*}
T^*:L_{\frac{2k}{2k-1}}(\mu) \rightarrow \ell_2(A); f \mapsto \left(a\mapsto \int{f\overline{\lambda_a}\dd\mu}\right)
\end{equation*}
and $\|T^*\|=\|T\|=O(\sqrt{k})$. Now apply $T^*$ to $f:=\sum_{a \in \T_N}{\lambda_a}$ -- note that sum is over $\T_N$ \emph{not} $A$. From first the orthogonality relations (\ref{eqn.ortho}), and then $\log$-convexity of $L_p$-norms, we have
\begin{equation}\label{eqn.optimise}
|A| = \|T^*f\|_{\ell_2(A)}^2=O\left(k\|f\|_{L_{\frac{2k}{2k-1}}(\mu)}\right) = O\left(k\|f\|_{L_2(\mu)}^{\frac{2}{k}}\|f\|_{L_1(\mu)}^{\frac{2k-2}{k}}\right).
\end{equation}
Again from (\ref{eqn.ortho}) we have
\begin{equation}\label{eqn.2le}
\|f\|_{L_2(\mu)}^2 = \sum_{a,b \in \T_N}{\int{\lambda_a\overline{\lambda_b}\dd \mu} }=N,
\end{equation}
and, since $|f|=|\sum_{a \in A}{\overline{\lambda_a}}|$,
\begin{align}
\nonumber \|f\|_{L_1(\mu)} & = \tau^{-1}\sum_{|z| \leq \frac{1}{2}\eta^{-1}}{F(z)\sum_{r \in \Z_N}{\left|\frac{1}{N}\sum_{a \in \T_N}{\exp(-2\pi i ar)\exp(-2\pi i z\phi(a))}\right|}}\\ \nonumber & = O\left(\sum_{|z| \leq \frac{1}{2}\eta^{-1}}{F(z)\|\exp(-2\pi i z\phi)\|_{A(\T_N)}}\right)\\ & \label{eqn.l1e} =O\left(\sup_{|z| \leq \frac{1}{2}\eta^{-1}}{\|\exp(-2\pi i z\phi)\|_{A(\T_N)}}\right).
\end{align}
Putting (\ref{eqn.2le}) and (\ref{eqn.l1e}) into (\ref{eqn.optimise}) and recalling that $k=\lceil \log 2N\rceil$ gives the result.
\end{proof}
\begin{proof}[Proof of Proposition \ref{prop.datks}]
Set
\begin{equation}\label{eqn.cont}
d_*:=\lfloor(\refconstbig{sksize}\cdot (2C)^{2c})^\frac{1}{1-2c}\log^{\frac{1+2c}{1-2c}}2N\rfloor \text{ and } \eta:=1/N^{2d_*+2}.
\end{equation}
Suppose that $a \in \T_N^{d}$ is such that if $\sigma \in \{-1,0,1\}^{d}$, $\sigma \cdot a =0$, and $\|\sigma \cdot \phi(a)\|_\T\leq \eta$, then $\sigma \equiv 0$. If $d>d_*$ then we may reduce $d$ to be equal to $d_*+1$ by throwing out $a_{d_*+2},\dots,a_{d}$. Hence we shall assume, for a contradiction that $d=d_*+1$. 

By Lemma \ref{lem.ks} applied to $a$, and using (\ref{eqn.bhhyp}), we have
\begin{equation*}
d_*+1 \leq \refconstbig{sksize}\cdot (\log 2N) \cdot C^{2c}\log^{2c}\left(3+\frac{1}{2}N^{2d_*+2}\right) \leq \refconstbig{sksize}\cdot C^{2c}\cdot (\log 2N)^{1+2c} \cdot (2d_*+2)^{2c}.
\end{equation*}
This rearranges to give
\begin{equation*}
 d_*+1 \leq \left(\refconstbig{sksize}\cdot (2C)^{2c}\cdot \log^{1+2c} 2N\right)^{\frac{1}{1-2c}},
 \end{equation*}
 which contradicts (\ref{eqn.cont}). Let $d$ be maximal such that there is $a \in \T_N^{d}$ such that if $\sigma \in \{-1,0,1\}^{d}$, $\sigma \cdot a =0$, and $\|\sigma \cdot \phi(a)\|_\T\leq \eta$, then $\sigma \equiv 0$. It follows from the previous that such a $d$ exists and in fact $d \leq d_*$; fix an $a \in \T_N^d$ that bears witness to this.
 
 Let $A=\{a_1,\dots,a_{d}\}$ and apply Dirichlet's approximation theorem (Theorem \ref{thm.dat} with $K=N^2$) to get some $1 \leq Q \leq N^{2d}$ and $\wt{\phi}:A \rightarrow \T_Q$ such that
\begin{equation}\label{eqn.phitilde}
\|\wt{\phi}(a_i) - \phi(a_i)\|_\T \leq \frac{1}{N^2Q} \text{ for all }1 \leq i \leq d.
\end{equation}
If $x \in \T_N$ then writing $a':=(a_1,\dots,a_d,x) \in\T_N^{d+1}$ we know by maximality of $d$ that there is $\sigma' \in \{-1,0,1\}^{d+1}$, $\sigma' \cdot a' = 0$ and $\|\sigma' \cdot \phi(a')\|_\T \leq \eta$. In view of the choice of $A$ we also know that $\sigma'_{d+1} \neq 0$, and by multiplying all the entries of $\sigma'$ through by $-1$ if necessary we may assume that $\sigma'_{d+1}=-1$. Writing $\sigma=(\sigma_1',\dots,\sigma_d')$ we have
\begin{equation}\label{eqn.rep}
x=\sigma \cdot a \text{ and } \|\phi(x) - \sigma\cdot \phi(a)\|_\T \leq \frac{1}{N^2Q}.
\end{equation}
For each $x \in \T_N$ pick a $\sigma^x \in \{-1,0,1\}^d$ such that (\ref{eqn.rep}) holds. Define
\begin{equation*}
\phi^*:\T_N \rightarrow \T_Q; x \mapsto \sigma^x \cdot \wt{\phi}(a).
\end{equation*}
Then from (\ref{eqn.phitilde}) and (\ref{eqn.rep}) we have
\begin{equation*}
\|\phi^*(x)-\phi(x)\|_\T \leq \sum_{\sigma^x_i \neq 0}{\|\wt{\phi}(a_i)-\phi(a_i)\|_\T} + \|\sigma^x\cdot \phi(a)-\phi(x)\|_\T \leq \frac{d+1}{N^2Q}=O_{C,c}(1/NQ),
\end{equation*}
since $d \leq d_*$. Since $Q\leq N^{2d}\leq N^{2d_*}$ and the result is proved.
\end{proof}

\section{Cohen's idempotent theorem}

Cohen's idempotent theorem from \cite{coh::} describes the structure of $f \in B(G)$ taking the values $0$ or $1$.  It is discussed in detail in \cite[Chapter 3]{rud::1}. In the case of finite Abelian groups it is contentless but there is a quantitative strengthening of Green and the author \cite{gresan::0}, and we shall need the ingredients of the proof of this.

It is fairly natural to connect Cohen's idempotent theorem with the Beurling-Helson Theorem: the former is the key ingredient in the description in \cite{Cohen:1960aa} of the homomorphisms $L_1(G) \rightarrow M(H)$ where $G$ and $H$ are locally compact Abelian groups, and the latter gives this description in a special case. See \cite[Theorem 4.1.3, p78]{rud::1} and the discussion following. In fact Cohen's theorem was already applied to prove a generalisation of the Beurling-Helson theorem in \cite{Domar:1973aa}.

The main result of this section is the following:
\begin{proposition}\label{prop.partialcohen}
Suppose that $G$ is a finite Abelian group, $1_A \in A(G)$ has $\|1_A\|_{A(G)} \leq M$ for some $M\geq 1$, and $\eta \in (0,1/4]$. Then there is $V \leq G$ with $\|1_A \ast \mu_V\|_{L_\infty(G)} \geq 1-\eta$ and $|V| \geq \exp(-\newconstbig{partial}M^{3}\eta^{-3}\log^52M\eta^{-1})|A|$.
\end{proposition}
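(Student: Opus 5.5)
We plan to follow the strategy from the quantitative idempotent theorem of Green and the author \cite{gresan::0}, but keep only its first step. That step finds a large subgroup on which $1_A$ is nearly constant. The argument has four stages.

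\emph{Stage 1: almost-periodicity of $1_A$.} We first produce a large set $X$ of $L_\infty$-almost periods. These are translates $x$ with
\[
\|1_A(\cdot+x)-1_A\|_{L_\infty(G)}\le \eta/2 .
\]
There are two ways to do this. One is the Croot--Sisask-type sampling argument. The other is the spectral argument: write $1_A=\sum_\gamma \wh{1_A}(\gamma)\gamma$ with $\sum_\gamma|\wh{1_A}(\gamma)|\le M$. Any $x$ with $|1-\gamma(x)|\le \eta/2M$ for every character $\gamma$ in a set carrying all but $\eta/4$ of the $\ell_1$-mass of $\wh{1_A}$ is an almost period.

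The crude version of this takes a Bohr set of rank about $M^2\eta^{-2}$. That is too lossy, so we instead follow the random-sampling approach. Sample $k=O(\eta^{-2}\log(2M\eta^{-1}))$ characters according to $|\wh{1_A}|/\|\wh{1_A}\|_1$. This gives a function $g$ with $\|g\|_{A(G)}\le M$ and $\|g-1_A\|_\infty\le \eta/8$, built from a set $\Delta$ of at most $k$ characters. Almost periods of $1_A$ then contain the Bohr set $B(\Delta,\eta/(8M))$, and a standard volume bound gives
\[
|B|\ge (\eta/8M)^{k}|G| .
\]
This is still measured relative to $|G|$, not $|A|$. To fix that we will use Chang's lemma, localised to $A$: the large spectrum of $1_A$ at the relevant threshold has dissociated dimension $O(M^2\eta^{-2}\log(2M\eta^{-1}))$ relative to $\log(|G|/|A|)$. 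This replaces the $|G|$ dependence by $|A|$ at the cost of another factor of $M\eta^{-1}$. That factor is where we expect $M^3\eta^{-3}\log^{O(1)}$ in the exponent to come from.

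\emph{Stage 2: from an approximate group to a subgroup.} The set $X$ of almost periods is symmetric. It also satisfies $X+X\subset X'$, where $X'$ is the set of $\eta$-almost periods, and $X$ is large relative to $A$ in the density sense. We apply the Freiman-type theorem in the finite-group form of Green--Ruzsa, or rather its Bogolyubov--Ruzsa form in a general Abelian group. With Sanders-type bounds, this finds a coset progression inside $2X-2X$ whose size is at least $\exp(-O(\log^{O(1)}K))|X|$, where $K$ is the doubling. Because we are working in a finite Abelian group and need a genuine subgroup, we take $V$ to be the subgroup part of the coset progression.

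We expect this to be the main obstacle: the progression part may be large while the subgroup part is trivial. We handle it as follows. Elements of $2X-2X$ are $2\eta$-almost periods of the $\{0,1\}$-valued function $1_A$ with $\eta<1/4$, so they are \emph{exact} periods on most points. A long progression of almost periods therefore forces its generators' multiples to cycle. Concretely, we take $V$ to be the subgroup generated by the progression and show that $V$ still consists of almost periods in the $L_1$ sense relative to $A$. The route is the dichotomy ``$1_A\ast\mu_V$ close to $1$ somewhere, or increment'', iterated at most $O(M)$ times using the norm $\|1_A\|_{A(G)}\le M$ as an energy.

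\emph{Stage 3: conclusion.} Finally, pick $x_0\in A$ in a region of density close to $1$ for $V$. Averaging $\|1_A-1_A(\cdot+v)\|$ over $v\in V$ gives $1_A\ast\mu_V(x_0)\ge 1-\eta$. The size bound is $|V|\ge\exp(-O(M^3\eta^{-3}\log^5 2M\eta^{-1}))|A|$, collecting the losses from Stages 1 and 2.
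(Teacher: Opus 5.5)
\textbf{There is a genuine gap, and it starts in Stage 1.}

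Because $1_A$ is $\{0,1\}$-valued, any $x$ with $\|1_A(\cdot+x)-1_A\|_{L_\infty(G)}\le\eta/2<1$ satisfies $A+x=A$ exactly. So your set $X$ is just the stabiliser of $A$, which is already a subgroup and is often trivial.

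Your sampling claim is in fact false. Take $G=\Z_p$ and $A$ an interval of fixed length $L$. Then $M=O(\log 2L)$, and hence your $k=O(\eta^{-2}\log(2M\eta^{-1}))$, are bounded independently of $p$. So $B(\Delta,\eta/8M)$ would contain at least $(\eta/8M)^k p\to\infty$ exact periods of an interval, which is impossible. The standard sampling argument for a uniform approximation gives $k=O(M^2\eta^{-2}\log|G|)$, and this example shows some dependence on $|G|$ cannot be avoided. Chang's lemma does not rescue this: it yields Bohr sets of size $\mu_G(A)^{O_{M,\eta}(1)}|G|$, which is not of the form $\exp(-O_{M,\eta}(1))|A|$ when $\mu_G(A)$ is small.

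Stage 2 then leaves the real difficulty, passing to a genuine subgroup, unresolved.
\begin{itemize}
\item For $L_1$-almost periods, the subgroup they generate need not consist of almost periods. In the same example, $1$ is an $L_1$-almost period and generates $\Z_p$, while the only admissible $V$ is $\{0\}$.
\item The ``increment iterated $O(M)$ times'' is never specified.
\item The Freiman step needs $|X+X|\le K|X|$, which $X+X\subset X'$ does not give.
\item In Stage 3 the point $x_0$ with density near $1$ is assumed rather than produced.
\end{itemize}

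\textbf{How the paper proceeds.} It localises to $A$ differently and obtains the subgroup from a gap in the range of a smoothed function.
\begin{itemize}
\item Parseval gives additive energy at least $\mu_G(A)^3/M^2$.
\item Balog--Szemer\'edi--Gowers then gives $S\subset A$ with $|S|\ge M^{-O(1)}|A|$ and doubling $M^{O(1)}$.
\item Proposition \ref{prop.frei} is applied to $S$ itself, not to a set of almost periods. This gives a Bohr system $B$ with $\mathcal{C}^\Delta(S;B_1)$ and $\dim^*B$ polylogarithmic in $M$, and with $\|1_S\ast\beta\|_{L_\infty(G)}\ge\alpha_M$ for every probability measure $\beta$ on $B_1$. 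This is what makes the final bound relative to $|A|$.
\item Proposition \ref{prop.cts}, with $\delta=\eta/4M$ and $p=O(\log 2\alpha_M^{-1})$, gives $L_p$ rather than $L_\infty$ almost periodicity inside $B_1$. The factor $M^3\eta^{-3}$ comes from its $p\delta^{-3}$ term, not from Chang.
\end{itemize}

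The key idea you are missing is the following. Combining the second and third conclusions of Proposition \ref{prop.cts} shows that $1_A\ast\mu$ takes values only in $[0,\eta]\cup[1-\eta,1]$, while changing by at most $\eta/4$ under translation by $B'_\kappa$. Hence the level set $T=\{x:|1-1_A\ast\mu(x)|\le\eta\}$ satisfies $T-B'_\kappa=T$ exactly. It follows that $T+V=T$ for $V$ the subgroup generated by $B'_\kappa$, and $|V|\ge|B'_\kappa|$ is bounded below via $\dim^*B'$ and $\mathcal{C}(S;B'_1)$.

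Finally, the lower bound $\alpha_M$ together with the large choice of $p$ forces $T\neq\emptyset$. Averaging over $x_0+V\subset T$ then gives $\|1_A\ast\mu_V\|_{L_\infty(G)}\ge 1-\eta$.
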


For the remainder of this section $G$ denotes a finite Abelian group and $\wh{G}$ the group of homomorphisms $G \rightarrow S^1$ under pointwise multiplication. Following custom \cite[p7]{rud::1} we sometimes write $\langle x,\gamma\rangle$ for $\gamma(x)$, and then the Fourier transform is defined as in (\ref{eqn.ft}). 

For $f:G \rightarrow \C$ and $x \in G$ we write
\begin{equation*}
\tau_x(f)(y):=f(y-x) \text{ for all }y \in G.
\end{equation*}
For $\mu $ a measure on $G$ (with $\sigma$-algebra $\mathcal{P}(G)$) we write $\wt{\mu}$ for the measure assigning mass $\overline{\mu}(-E)$ to $E \subset G$, and if $f,g:G \rightarrow \C$ too then
\begin{equation*}
f \ast \mu(x)=\int{f(y)\dd\mu(x-y)} \text{ and } f \ast g(x):=\int{f(y)g(x-y)\dd\mu_G(y)} \text{ for all }x \in G.
\end{equation*}

We shall use ingredients from \cite{san::12} for proving a quantitative version of Cohen's idempotent theorem. That paper has some nonstandard terminology which was introduced to make the proofs there easier, and we record the relevant parts now.

Given $\Lambda \subset \wh{G}$ and a function $\delta:\Lambda \rightarrow \R_{>0}$ we write
\begin{equation*}
\Bohr(\Lambda,\delta):=\{x \in G: \|\lambda(x)\|_\T<\delta(\lambda) \text{ for all }\lambda \in \Lambda\}.
\end{equation*}
This is a slight, but completely natural, generalisation of the usual definition of Bohr set from \emph{e.g.\ }\cite[Definition 4.1, p187]{taovu::} which takes $\delta$ to be a constant function.

A Bohr system is a vector $B=(B_\eta)_{\eta \in (0,1]}$ where $B_\eta=\Bohr(\Lambda,\eta\delta)$ for each $\eta \in (0,1]$. Bohr systems are determined by the set $\Lambda$ and the function $\delta$, though different sets and functions may determine the same Bohr system. 

For $S,T \subset G$, the covering number of $S$ by $T$ is
\begin{equation}\label{eqn.covin}
\mathcal{C}(S;T):=\min\{|X|: S \subset X+T\} \geq \frac{|S|}{|T|}.
\end{equation}
In view of the inequality we think of this as measuring the size of $S$ relative to $T$.

Covering numbers are used to define the doubling dimension of Bohr systems:
\begin{equation}\label{eqn.dd}
\dim^* B = \sup\left\{\log_2\mathcal{C}\left(B_\eta;B_{\frac{1}{2}\eta}\right): \eta \in (0,1]\right\},
\end{equation}
which capture for us how Bohr systems grow. One may think of the classic size bounds on Bohr sets in \cite[Lemma 4.19]{taovu::} as saying that a Bohr system where $\Lambda$ has size $k$ and $\delta$ is a constant function has doubling dimension $O(k)$.

For the results we need there are two other definitions. The first is that of difference covering number \cite[p6]{san::12} denoted $\mathcal{C}^\Delta(S;T)\label{eqn.dcn}$. It turns out this is equivalent to the covering numbers of some related sets (see \cite[Lemma 2.4 (iii) \& (iv), p6]{san::12}), but for our work here it just matters that it is the same term in each of Propositions \ref{prop.cts} \& \ref{prop.frei}. The second is the dimension of a Bohr system \cite[p13]{san::12}. Here all that matters to us is that it is equivalent to the doubling dimension of the Bohr system up to multiplicative constants (see \cite[Lemma 3.7 (iii), p13]{san::12}).
 
The reason for these two definitions is that they behave better w.r.t.\ intersections of Bohr systems than respectively covering numbers (see \cite[Lemma 2.4 (ii), p6]{san::12}), and doubling dimension (see \cite[Lemma 3.7 (i), p13]{san::12}). This makes the proof of Proposition \ref{prop.cts} in \cite{san::12} cleaner. We do not need this better behaviour for the present work but we include them so that the reader can quickly appreciate the differences between the statements of Propositions \ref{prop.cts} \& \ref{prop.frei} below and those in their original source.

\begin{proposition}[{\cite[Proposition 7.1, p25]{san::12}}]\label{prop.cts}
Suppose that $G$ is a finite Abelian group, $B$ is a Bohr system with $\dim^*B \leq d$ (for some $d \geq 1$), $f \in A(G)$, and $\delta,\kappa \in (0,1]$ and $p \geq 1$ are parameters.  Then there is a Bohr system $B'$ with $B_1'\subset B_1$, and probability measures $\mu$ and $\nu$ with $\supp \nu \subset B_\kappa'$ such that 
\begin{enumerate}[label=(\roman*)]
\item \label{pt.cts}
\begin{equation*}
\dim^* B' \leq 2d + O(p\delta^{-2}\log^22\delta^{-1});
\end{equation*}
and for any $S \subset G$ we have
\begin{equation*}
\tikzmark{b3}\mathcal{C}(S;B_1') \leq \exp(O(\delta^{-1}d\log 2\kappa^{-1}d + p\delta^{-3}\log^32p\kappa^{-1}\delta^{-1})) \mathcal{C}^{\Delta}\left(S;B_{1}\right)\tikzmark{b2}
\end{equation*}
\begin{tikzpicture}[overlay, remember picture, >=stealth]

\node[font=\tiny, right=2cm of {pic cs:b2},  align=left, red] (explain2)
{difference\\ covering\\ number\\  discussed\\ on p\pageref{eqn.dcn};\\ it is a variant\\ of covering\\ number (\ref{eqn.covin});\\  it fits with\\ the output\\ of Prop.\ \ref{prop.frei}};

\draw[<-, red,rounded corners=0pt]
  ([yshift=4pt]pic cs:b2) -- ([yshift=4pt]explain2.west) ;
\end{tikzpicture}
\begin{tikzpicture}[overlay, remember picture, >=stealth]

\node[font=\tiny, left=2cm of {pic cs:b3},  align=left, red] (explain1)
{covering\\ number (\ref{eqn.covin})};

\draw[->, red]
  ([yshift=4pt]explain1.east) -- ([yshift=4pt, xshift=-3pt]pic cs:b3);
\end{tikzpicture}
\item\label{pt.2} \footnote{This part of the conclusion is stated in terms of $B'$-approximately invariant probability measures in \cite[Proposition 7.1]{san::12}. The definition of these is on \cite[p16]{san::12}, and here we have recorded the consequence of this definition from \cite[Lemma 4.2, p17]{san::12}.}
\begin{equation*}
\|\tau_x(f\ast \mu) - f \ast \mu\|_{L_\infty(G)} \leq \eta \|f\|_{L_\infty(G)}\text{ for all }x \in B_{\frac{1}{2}\eta}' \text{ and all }\eta \in (0,1];
\end{equation*}
\item \label{pt.3}and
\begin{equation*}
\sup_{x \in G}{\|f - f\ast \mu\|_{L_p(\tau_x(\nu))}} \leq \delta \|f\|_{A(G)}.
\end{equation*}
\end{enumerate}
\end{proposition}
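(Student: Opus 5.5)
This is \cite[Proposition 7.1, p25]{san::12}, so the plan is to \emph{translate} the statement there into the present notation, not to reprove it. The one genuine difference is in part \ref{pt.2}, as the footnote indicates. There are three steps.

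First, part \ref{pt.cts}. In \cite{san::12} the bound is stated for the dimension of the output Bohr system $B'$, not for its doubling dimension. By \cite[Lemma 3.7 (iii), p13]{san::12} the two agree up to absolute multiplicative constants. So the hypothesis $\dim^* B\leq d$ gives a dimension bound of $O(d)$ for $B$, and the output dimension bound converts back into $\dim^* B' \leq 2d+O(p\delta^{-2}\log^22\delta^{-1})$. The constant factors are absorbed by rescaling $d$ inside the $O(\cdot)$ terms. The only exception is the leading $2d$, whose coefficient I would check against the original statement. The covering estimate in terms of $\mathcal{C}^{\Delta}(S;B_1)$ is copied verbatim, and so are the containment $B_1'\subset B_1$ and the support condition $\supp \nu \subset B'_\kappa$.

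Second, part \ref{pt.2}. In \cite{san::12} the conclusion is that $\mu$ is $B'$-approximately invariant in the sense of \cite[p16]{san::12}. I would invoke \cite[Lemma 4.2, p17]{san::12}, which says exactly that convolution with such a measure is Lipschitz along the Bohr system. That is, for $x \in B'_{\frac12\eta}$ we have
\begin{equation*}
\|\tau_x(f\ast\mu)-f\ast\mu\|_{L_\infty(G)}\leq \eta\|f\|_{L_\infty(G)}.
\end{equation*}
The underlying reason is that $\|\tau_x\mu-\mu\|\leq \eta$ in total variation for such $x$, combined with Young's inequality $\|f\ast(\tau_x\mu-\mu)\|_\infty \leq \|f\|_\infty\|\tau_x\mu-\mu\|$.

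Third, part \ref{pt.3} is the $L_p$-approximation statement of \cite{san::12}, restated with the supremum over translates of $\nu$. That source result is itself proved by an almost-periodicity argument of Croot--Sisask type, run in $L_p(\tau_x(\nu))$ and iterated inside Bohr systems. The $p\delta^{-2}\log^2$ increment in dimension and the $p\delta^{-3}\log^3$ covering loss come from that argument.

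The main obstacle is bookkeeping rather than mathematics. I would check that the parameter conventions match those of \cite{san::12}, namely the roles of $\kappa$, $\delta$ and $p$, and the normalisation of $\|f\|_{A(G)}$ under the Fourier convention (\ref{eqn.ft}). I would also confirm that no hidden dependence on $|G|$ is introduced when passing between dimension and doubling dimension.
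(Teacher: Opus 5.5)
Your proposal matches the paper's treatment. The paper gives no independent argument; it imports \cite[Proposition 7.1]{san::12}, passes between dimension and doubling dimension via \cite[Lemma 3.7 (iii)]{san::12}, and turns the approximate invariance of $\mu$ into part \ref{pt.2} via \cite[Lemma 4.2]{san::12}, exactly as you describe. Your caution about the coefficient $2$ in front of $d$ is harmless for the paper's purposes, since part \ref{pt.cts} is only ever used through the bound $\dim^*B'=O(d+\cdots)$ in (\ref{eqn.b*}).
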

The intuition here is that we are given a Bohr set $B_1$, and a function $f\in A(G)$ which we expect to be continuous in a sense we make quantitative. (Without this quantitative aspect the continuity is contentless when $G$ is finite.) The proposition finds two smaller Bohr sets $B_1'$ and $B_{\kappa}'$ such that $B_1' +B_{\kappa}' \approx B_1'$ and having properties \ref{pt.cts}--\ref{pt.3}.

Property \ref{pt.cts} tells us that $B_1'$ is not too small, and that for any set $S$ if $S$ has large intersection with a translate of $B_1$ then it also has large intersection with a translate of $B_1'$. In Properties \ref{pt.2} \& \ref{pt.3} it is helpful to pretend that $\mu$ is $\mu_{B_1}$, $\nu$ is $\mu_{B_{\kappa}'}$, and $\eta \approx \kappa$ so that \ref{pt.2} captures $B_1' +B_{\kappa}' \approx B_1'$, and \ref{pt.3} tells us that $f$ is very close to constant on translates of $B_{\kappa}'$ -- this is the sense in which the continuity is made quantitative.

\begin{proposition}[{\cite[Proposition 8.1, p32]{san::12}}]\label{prop.frei}
Suppose that $G$ is a finite Abelian group and $S$ is non-empty with $\mu_G(S+S)\leq K\mu_G(S)$.  Then there is a Bohr system $B$ with
\Needspace{6\baselineskip}
\begin{equation*}
\tikzmark{b1} \mathcal{C}^{\Delta}(S;B_1) = \exp(O(\log^32K (\log \log 3K)^4))
\end{equation*}
\begin{tikzpicture}[overlay, remember picture, >=stealth]

\node[font=\tiny, left=4cm of {pic cs:b1},  align=left, red] (explain)
{difference\\ covering\\ number\\  discussed\\ on p\pageref{eqn.dcn};\\ it is a variant\\ of covering\\ number (\ref{eqn.covin});\\  it fits with\\ the input\\ of Prop.\ \ref{prop.cts}};

\draw[->, red]
  ([yshift=4pt]explain.east) -- ([yshift=4pt, xshift=-3pt]pic cs:b1);
\end{tikzpicture}
and
\begin{equation*}
\dim^* B=O(\log^32K (\log \log 3K)^4),
\end{equation*}
such that
\begin{equation}\label{eqn.jp}
\|1_S \ast \beta\|_{L_\infty(G)} =\exp(-O(\log2K (\log \log 3K)))
\end{equation}
for any probability measure $\beta$ supported on $B_1$.
\end{proposition}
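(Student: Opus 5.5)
The plan is to derive Proposition \ref{prop.frei} from the Croot--Sisask almost-periodicity lemma combined with a Chang-type spectral bound, which is essentially the quasi-polynomial Bogolyubov--Ruzsa argument recast in the language of Bohr systems. The idea is to produce a Bohr system $B$ whose elements are almost periods of a suitable convolution involving $1_S$. Both the covering bound and the density bound (\ref{eqn.jp}) would then follow from this one structural fact.

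\emph{Step 1 (almost periods).} By Pl\"unnecke's inequality, $\mu_G(S+S-S)\leq K^3\mu_G(S)$. Apply Croot--Sisask to $g:=1_{S+S}\ast \mu_{-S}$, or to $1_S\ast\mu_{-S}$, in $L_p$ with $p\asymp \log 2K$. This gives a set $X\subset S$ with $\mu_G(X)\geq \exp(-O(p\,\epsilon^{-2}\log 2K))\mu_G(S)$ such that every $t\in kX-kX$ satisfies
\[
\|\tau_t g-g\|_{L_p}\leq k\epsilon\|g\|_{L_p}.
\]
Here $k$ is an integer parameter with $k\asymp \log 2K$ and $\epsilon\asymp 1/k$.

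\emph{Step 2 (Bohr system).} Next, convert $kX-kX$ into a Bohr system. Convolving $1_X\ast1_{-X}$ $k$ times, its Fourier transform is concentrated on the large spectrum of $X$ relative to the density $\mu_G(X)/\mu_G(S+S)$. Chang's lemma places that spectrum in the span of $O(\log^2 2K\cdot(\log\log 3K)^{O(1)})$ characters. Taking $\Lambda$ to be this dissociated set with a suitable radius function $\delta$, we get a Bohr system $B$ with $B_1\subset kX-kX$. The doubling dimension $\dim^*B$ is $O(|\Lambda|)$ by the standard Bohr set size bounds, and after bookkeeping of the losses from $p$, $k$ and $\epsilon$ this should be $O(\log^3 2K(\log\log 3K)^4)$. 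The difference covering number $\mathcal{C}^\Delta(S;B_1)$ would be controlled by Ruzsa's covering lemma. Since $\mu_G(S+B_1)\leq\mu_G(S+kX-kX)$ is at most $K^{O(k)}\mu_G(S)$, and $\mu_G(B_1)$ is large relative to $\mu_G(S)$ by the lower bound on Bohr set sizes, this gives the stated $\exp(O(\log^32K(\log\log3K)^4))$ bound, which I would then pass to the difference version via \cite[Lemma 2.4]{san::12}.

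\emph{Step 3 (density (\ref{eqn.jp})).} For $t\in B_1$, the $L_p$ almost-periodicity with $p\asymp\log 2K$, together with the fact that $g$ is supported on a set of measure at most $K^{O(1)}\mu_G(S)$, yields a pointwise statement on a large portion of $S$. Concretely, for each $t\in B_1$ one should obtain $\mu_G(S\cap(S+t))\geq \exp(-O(\log 2K\log\log 3K))\mu_G(S)$. Given any probability measure $\beta$ supported on $B_1$, average over $y\in S$:
\[
\int 1_S\ast\beta \dd\mu_S=\int \frac{\mu_G(S\cap(S+t))}{\mu_G(S)}\dd\beta(t)\geq \exp(-O(\log2K\log\log 3K)).
\]
Hence the supremum $\|1_S\ast\beta\|_{L_\infty(G)}$ satisfies the same lower bound, which is the content of (\ref{eqn.jp}). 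The upper bound $\|1_S\ast\beta\|_{L_\infty(G)}\leq 1$ is trivial.

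\emph{Main obstacle.} I expect the hard part to be Step 2: obtaining the sharp dimension bound $\log^3 2K(\log\log 3K)^4$ rather than a worse power. This requires balancing the three choices $p\asymp\log 2K$, $k\asymp\log 2K$ and $\epsilon\asymp 1/k$ against the Chang bound. It also requires using the refined, dissociativity-based form of Chang's lemma, so that the double-logarithmic losses appear only to the fourth power. My first attempt would follow the quasi-polynomial Bogolyubov argument directly, tracking the Bohr radii as a function $\delta$ in order to stay within the Bohr-system framework.
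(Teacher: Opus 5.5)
The paper gives no proof of this proposition; it is quoted from \cite[Proposition 8.1, p32]{san::12}. Your sketch therefore has to stand on its own, and two of its steps do not.

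\emph{Step 3 rests on a false claim.} You assert that $\mu_G(S\cap(S+t))\geq\exp(-O(\log 2K\log\log 3K))\mu_G(S)$ for \emph{every} $t\in B_1$. This fails for any Bohr system satisfying the covering conclusion.

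\begin{itemize}
\item Take $G=\mathbb{F}_2^n$ and $S=\{x:|x|\leq n/2-\sqrt{n}\}$, where $|\cdot|$ is Hamming weight. Then $\mu_G(S)$ is bounded below as $n\to\infty$, so $K=O(1)$.
\item Since $|x|+|x+t|\geq|t|$, we get $S\cap(S+t)=\emptyset$ whenever $|t|>n-2\sqrt{n}$.
\item In $\mathbb{F}_2^n$ every $B_1$ is a subspace. The covering conclusion forces $|B_1|\geq\exp(-O_K(1))|S|$; this is exactly how it is used in the proof of Proposition \ref{prop.partialcohen}. So $B_1$ has codimension $d=O_K(1)$.
\item Any subspace $V$ of codimension $d$ contains a vector of weight at least $n-d$: write $\mathbf{1}$ as an element of $V$ plus at most $d$ standard basis vectors. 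For large $n$ this vector lies in $B_1$ and has $S\cap(S+t)=\emptyset$.
\end{itemize}

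The underlying issue is that $L_p$ almost-periodicity of $g=1_S\ast\mu_{-S}$ controls $\tau_tg-g$ only on average, while $\mu_S(S+t)=g(-t)$ is a single value. In this example $g(y)\approx g(y+\mathbf{1})\approx\mu_G(S)$ for typical $y$, yet $g(\mathbf{1})=0$.

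The correct route is averaging. Since $B_1$ is symmetric, for $\beta$ supported on $B_1$ we have $\int 1_S\ast\beta\dd\mu_{S+B_1}=\mu_G(S)/\mu_G(S+B_1)$, so you only need to bound $\mu_G(S+B_1)$. Your own $g=1_{S+S}\ast\mu_{-S}$ equals $1$ on $S$. Hence any almost-period $t$ with $p\geq 3\log 2K$ and $k\epsilon<1/e$ has $g(y-t)>0$ for some $y\in S$, so $t\in 2S-2S$. Thus $B_1\subset kX-kX$ would give $\mu_G(S+B_1)\leq K^5\mu_G(S)$, and (\ref{eqn.jp}) follows with room to spare. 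By contrast, your bound $K^{O(k)}$ with $k\asymp\log 2K$ only gives $\exp(-O(\log^22K))$.

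\emph{Step 2 is a missing idea, not bookkeeping.} Chang's lemma bounds dissociated subsets of the spectrum by $O(\log\mu_G(X)^{-1})$, which $K$ does not control when $S$ is sparse in $G$. In fact no argument can put the spectrum in the $\{-1,0,1\}$-span of $O_K(1)$ characters. Take $S=\{0,\dots,n-1\}\subset\Z_p$ with $p\gg n$, so $K<2$. For any non-empty $X\subset S$, every $|\gamma|\leq p/8n$ has $|\wh{1_X}(\gamma)|\geq 2^{-1/2}\wh{1_X}(0)$. That gives about $p/4n$ spectral characters, whereas a span of $d$ characters has at most $3^d$ elements.

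The right Bohr set in that example is $\{t:\|t/p\|_\T<\delta n/p\}$. This is a single character with a radius depending on $p$, and it approximately annihilates the spectrum without spanning it. What you actually need is a local Chang lemma, with dissociativity taken relative to an ambient Bohr system or approximate group, in the spirit of the hypothesis of Lemma \ref{lem.ks}. It must:
\begin{itemize}
\item produce $(\Lambda,\delta)$ with genuinely non-constant radii;
\item control $\dim^*B$ as a doubling dimension rather than through $|\Lambda|$;
\item bound $|B_1|$ below relative to $|S|$ rather than $|G|$ (the standard Bohr set size bound is relative to $|G|$).
\end{itemize}
That is the real content of the proposition, and your sketch does not supply it.

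Your numbers are also inconsistent. With $p\asymp\log 2K$ and $\epsilon\asymp1/\log 2K$, Croot--Sisask only gives $\mu_G(X)\geq\exp(-O(\log^42K))\mu_G(S)$. A Chang-type bound in terms of this density then gives dimension $O(\log^42K)$, not the $O(\log^22K(\log\log 3K)^{O(1)})$ you assert.
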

This proposition is a routine Freiman-type theorem with the Bohr system playing the role of the coset-progression that usually appears.

\begin{proof}[Proof of Proposition \ref{prop.partialcohen}] The Fourier transform takes convolution to multiplication and so by Parseval's theorem and $\log$-convexity of $\ell_p$-norms we have
\begin{equation*}
\int{(1_A \ast 1_{-A})^2\dd \mu_G} = \|\wh{1_A}\|_{\ell_4(\Gamma)}^4 \geq \frac{\|\wh{1_A}\|_{\ell_2(\Gamma)}^6}{\|\wh{1_A}\|_{\ell_1(\Gamma)}^2}\geq \frac{\mu_G(A)^3}{M^2}.
\end{equation*}
Since $G$ is finite, the integral on the left is $|G|^{-3}$ times the additive energy of $A$, that is times the number of quadruples $(x,y,z,w) \in A^4$ such that $x+y=z+w$.

By the Balog-Szemer{\'e}di-Gowers Theorem\footnote{We want it in the form that says if the additive energy of $A$ (that is $E(A,A)$ from \cite[Definition 2.8, p78]{taovu::}) is at least $\kappa |A|^3$ then there is a subset $S \subset A$ with $|S| \geq \kappa^{O(1)}|A|$ and $|S+S| \leq \kappa^{-O(1)}|S|$. This can be found in \cite{taovu::} by combining Theorem 2.31 (i) {$\Rightarrow$} (iv), p100, with Proposition 2.27 (i) {$\Rightarrow$} (vi), p93, recalling the definition of $\sigma[A]=\frac{|A+A|}{|A|}$ from Definition 2.4, p74.} there is $S \subset A$ with $|S| \geq M^{-O(1)}|A|$ and $\mu_G(S+S) \leq M^{O(1)}\mu_G(S)$. By Proposition \ref{prop.frei} applied to $S$ there is a Bohr system $B$ with
\begin{equation}\label{eqn.ku}
\mathcal{C}^\Delta(S;B_1) \leq \exp(O(\log^{3}2M(\log \log 3M)^4)),
\end{equation}
\begin{equation*}
d:=\dim^*B = O(\log^{3}2M(\log \log 3M)^4),
\end{equation*}
and
\begin{equation}\label{eqn.low}
\|1_{S} \ast \beta \|_{L_\infty(G)} \geq \alpha_M:=(2M)^{-O(\log \log 3M)}
\end{equation}
for any probability measure $\beta$ supported on $B_1$.

We then apply Proposition \ref{prop.cts} with $f=1_A$, $\delta:=\frac{\eta}{4M}$, $p:= \log_{\frac{3}{2}} 2\alpha_M^{-1}$, $\kappa:=\frac{1}{8}\eta$, to get a Bohr system $B'$ with $B'_1 \subset B_1$, and probability measures $\mu$ and $\nu$ with $\supp \nu \subset B_\kappa'$ such that
\begin{equation}\label{eqn.b*}
\dim^*B' =O(d + p\delta^{-3}\log^22\delta^{-1})=O(M^{3}\eta^{-3}\log^42M\eta^{-1})
\end{equation}
and
\begin{align}
\nonumber \frac{|S|}{|B_1'|} \leq \mathcal{C}(S;B_1')& \leq \exp(O(\delta^{-1}d\log 2\kappa^{-1}d + p\delta^{-3}\log^32p\kappa^{-1}\delta^{-1}))\mathcal{C}^\Delta\left(S;B_1\right)\\ \label{eqn.size}  & = \exp(O(M^{3}\eta^{-3}\log^52M\eta^{-1}))\mathcal{C}^\Delta\left(S;B_1\right);
\end{align}
and such that
\begin{equation}\label{eqn.const}
|\tau_y(1_A \ast \mu)(x) -1_A \ast \mu(x)| \leq \frac{1}{4}\eta \text{ for all } y \in B_\kappa' \text{ and }x\in G;
\end{equation}
and
\begin{equation}\label{eqn.invA}
\|1_A -1_A \ast \mu\|_{L_p(\tau_x(\nu))} \leq \frac{1}{4}\eta \text{ for all }x \in G.
\end{equation}
Since $\nu$ is supported in $B_\kappa'$, and $B_\kappa'$ is symmetric, $\wt{\nu}$ is a probability measure supported in $B_{\kappa}'$ too. It follows that
\begin{equation*}
\int{|1_A \ast \mu(z)-1_A \ast \mu(x)|^p\dd \tau_x(\nu)(z)}=\int{|\tau_y(1_A \ast \mu)(x) -1_A \ast\mu(x)|^p\dd \wt{\nu}(y)} \leq \left(\frac{1}{4}\eta\right)^p
\end{equation*}
for all $x \in G$, where the inequality is from (\ref{eqn.const}). Hence
\begin{equation*}
\|1_A \ast \mu - 1_A \ast \mu(x)\|_{L_p(\tau_x(\nu))} \leq \frac{1}{4}\eta \text{ for all }x \in G.
\end{equation*}
Combined with (\ref{eqn.invA}) this implies that
\begin{equation*}
\|1_A -1_A \ast \mu(x)\|_{L_p(\tau_x(\nu))}\leq \frac{1}{2}\eta \text{ for all } x \in G.
\end{equation*}
In particular
\begin{equation}\label{eqn.specific}
1_A \ast \wt{\nu}(x)|1-1_A \ast \mu(x)|^p + (1-1_A \ast \wt{\nu}(x))|1_A \ast \mu(x)|^p \leq \frac{\eta^p}{2^p} \text{ for all }x \in G.
\end{equation}
Since $1_A \ast \wt{\nu}(x) \in [0,1]$, either $1_A \ast \wt{\nu}(x)\geq \frac{1}{2}$ or $1-1_A \ast \wt{\nu}(x) \geq \frac{1}{2}$, and hence either
\begin{equation}\label{eqn.ai}
|1-1_A \ast \mu(x)| \leq \eta \text{ or }|1_A \ast \mu(x)| \leq \eta \text{ for all }x \in G,
\end{equation}
where the particular inequality that holds may depend on $x$.

Define
\begin{equation*}
T:=\{x\in G: |1-1_A \ast \mu(x)| \leq \eta\}.
\end{equation*}
If $x \in T$ then by (\ref{eqn.const}) $|1-1_A \ast \mu(x-y)| \leq \frac{5}{4}\eta$ for all $y \in B'_\kappa$. Since $\eta<\frac{4}{9}$ we have by (\ref{eqn.ai}) that $|1-1_A \ast \mu(x-y)| \leq \eta$ \emph{i.e.\ }$x-y \in B_\kappa$. In particular, since $0 \in B_\kappa$ we have $T-B_\kappa=T$, and so $T+V=T$ for $V$ the subgroup generated by $B_\kappa$.

Since $B_\kappa' \subset B_1$, $S \subset A$ and $\supp \wt{\nu} \subset -B_\kappa'=B_\kappa'$ we get from (\ref{eqn.low}) that there is some $x_0\in G$ such that $1_{S} \ast \wt{\nu}(x_0)=\|1_{S} \ast \wt{\nu}\|_{L_\infty(G)}$, whence
\begin{equation*}
1_A \ast \wt{\nu}(x_0)=\int{1_A\dd \tau_{x_0}(\nu)} \geq \int{1_{S}\dd \tau_{x_0}(\nu)} =\|1_{S} \ast \wt{\nu}\|_{L_\infty(G)} \geq \alpha_M.
\end{equation*}
If $|1_A \ast \mu(x_0)| \leq \eta$, then since $\eta \leq \frac{1}{4}$ we have from (\ref{eqn.specific}) that
\begin{equation*}
\alpha_M\left(\frac{3}{4}\right)^p  \leq 1_A \ast \wt{\nu}(x_0)(1-\eta)^p \leq 2^{-p}
\end{equation*}
which contradicts the choice of $p$, so by (\ref{eqn.ai}) we have $x_0\in T$.

From the definition of doubling dimension (\ref{eqn.dd}) and the inequality in (\ref{eqn.covin}) for covering numbers we have
\begin{equation*}
|B_\kappa'| \geq 2^{-\lceil \log_2\kappa^{-1}\rceil\dim^* B'}|B_1'|.
\end{equation*}
Inserting the upper bound on $\dim^*B'$ from (\ref{eqn.b*}), the lower bound on $|B'_1|$ in (\ref{eqn.size}), the upper bound on $\mathcal{C}^\Delta(S;B_1)$ from (\ref{eqn.ku}) and finally the fact that $|S| \geq M^{-O(1)}|A|$ we get
\begin{equation*}
|B_\kappa'| \geq \exp(-O(M^{3}\eta^{-3}\log^52M\eta^{-1}))|A|.
\end{equation*}
Now $|V| \geq |B_\kappa'|$ and also 
\begin{equation*}
\int{1_A \ast \mu \dd \tau_{x_0}\mu_V} \geq 1-\eta
\end{equation*}
since $x_0 +V \subset T$, and hence $\|1_A \ast \mu_V\|_{L_\infty(G)} \geq 1-\eta$. The result is proved.
\end{proof}

\section{The limiting argument}

In this final section we record a version of the limiting argument relevant to our circumstances.
\begin{lemma}\label{lem.limit}
Suppose that $\phi:\T \rightarrow \T$ is continuous and for all $\eta \in (0,1]$ and $N_0\in \N^*$ there is an integer $N \geq N_0$, $\phi^*:\T_N \rightarrow \T$ with graph $\Gamma:=\{(x,\phi^*(x)): x \in\T_N\}$, and $W$ such that
\begin{enumerate}[label=(\roman*)]
\item  $\|\phi(x)-\phi^*(x)\|_\T \leq \eta$ for all $x \in \T_N$;
\item  $W$ is a coset of a finite subgroup of $\T^2$ such that $|\Gamma \triangle W| \leq \eta N$.
\end{enumerate}
Then there is $t\in \T$ and $w \in \Z$ such that $\phi(x)=wx+t$ for all $x \in \T$.
\end{lemma}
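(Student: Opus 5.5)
Fix $\eta$ small and $N$ large (to be sent to $0$ and $\infty$), and take $\phi^*$, $\Gamma$, $W$ as in the hypotheses. The plan is to show that, at every scale $N$ along a sequence, $\phi$ agrees up to $O(\eta)$ with an affine map $x\mapsto w_Nx+t_N$ on most of $\T_N$. Continuity of $\phi$ then upgrades this to every point, and a compactness argument gives the exact identity.

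\textbf{Step 1: structure of $W$.} Write $W=u+V$ with $V\le \T^2$ finite. Since $|\Gamma|=N$ and $|\Gamma\triangle W|\le \eta N$, we have $|V|\in[(1-\eta)N,(1+\eta)N]$ and $|\Gamma\cap W|\ge (1-\eta)N$. Let $\pi$ be the first-coordinate projection. It is injective on $\Gamma$, so $|\pi(W)|\ge (1-\eta)N$, and $\pi(W)$ is a coset of a finite subgroup of $\T$, hence of some $\T_m$. Now $\Gamma\cap W$ projects into both $\T_N$ and $\pi(W)$. Since $\T_N\cap(\text{coset of }\T_m)$ has size at most $\gcd(N,m)$, we get $\gcd(N,m)\ge(1-\eta)N$. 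For $\eta<1/2$ this forces $N\mid m$. Then $|\ker\pi|_V|=|V|/m\le(1+\eta)N/m$, which is less than $2$ unless $m=N$ with kernel of size $1$; if instead $m\ge 2N$ the projection would be more than $N(1+\eta)$, which is impossible. Hence $\pi|_V$ is an isomorphism onto $\T_N$, and $V$ is the graph of a homomorphism $\T_N\to\T$, namely $x\mapsto w x$ with $w\in\Z$ taken modulo $N$. Consequently $W$ is the graph of $x\mapsto wx+t$ on a coset of $\T_N$; since it meets $\Gamma\subset\T_N\times\T$, that coset is $\T_N$ itself. Thus $\phi^*(x)=wx+t$ for all but at most $\eta N$ points $x\in\T_N$, and by (i) we get $\|\phi(x)-wx-t\|_\T\le\eta$ off an exceptional set $X$ with $|X|\le\eta N$.

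\textbf{Step 2: from most points to all points, and bounding $w$.} Because $|X|\le \eta N$ and $\eta<1/2$, for every $x\in\T_N$ both $x$ and $x+1/N$ are within $O(\eta N)$ steps of a good point; the gaps in $\T_N\setminus X$ have length at most $\eta N+1$. Let $\omega$ be the modulus of continuity of $\phi$, which is uniform since $\T$ is compact. Between consecutive good points $x<x'$ at distance $k/N\le(\eta N+1)/N\le 2\eta$, the approximation gives
\[
\|w k/N - (\phi(x')-\phi(x))\|_\T\le 2\eta,
\]
so $\|wk/N\|_\T\le 2\eta+\omega(2\eta)$. Choose the representative $w\in(-N/2,N/2]$. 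Taking a pair of adjacent good points with $k=1$ (which exist, since most points are good and $X$ is small) gives $\|w/N\|_\T\le 2\eta+\omega(1/N)$, so $|w|\le N(2\eta+\omega(1/N))$. I expect this bound to be too weak as stated, and this is the main obstacle: we must show that $w$ is bounded independently of $N$. I would attack it by stepping through runs of consecutive good points, which must exist because $X$ has density at most $\eta$. For $j$ up to $L$ with $jL/N$ small, one has $\|jw/N\|_\T\le 2\eta+\omega(L/N)$. A standard lemma says that if $\|jw/N\|_\T\le 1/4$ for all $j\le L$, then $|w|\le N/(4L)$ (for the representative nearest $0$). Choosing $L\approx \delta N$ with $\eta,\omega(\delta)$ small then gives $|w|=O(1/\delta)$, where $\delta$ depends only on $\phi$ and not on $\eta$ or $N$. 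The runs of good points need not have length $\delta N$, so instead I would use that the set $\{j\le \delta N: x+j/N\text{ and }x\text{ both good}\}$ has density at least $1-2\eta$ for a suitable $x$ (by averaging). Combined with the approximate additivity of $j\mapsto jw/N$, filling the gaps using bounded steps, this should give the same conclusion.

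\textbf{Step 3: limit.} With $|w_N|\le W_0$ bounded, and each bad point within $2\eta$ of a good point, continuity yields $\|\phi(x)-w_Nx-t_N\|_\T\le 3\eta+\omega(2\eta)+2W_0\eta$ for all $x\in\T_N$. Now let $\eta\to0$ and $N_0\to\infty$ along a sequence. We may pass to a subsequence on which $w_N=w$ is constant and $t_N\to t$. Every point of $\T$ is a limit of points of $\T_N$, so by continuity $\phi(x)=wx+t$ on all of $\T$.
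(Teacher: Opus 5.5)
Your proof is correct, but it takes a genuinely different route from the paper. The paper never identifies $W$ or a slope. It works with $\Phi(x,y,z)=\exp(2\pi i(\phi(x+y+z)-\phi(x+y)-\phi(x+z)+\phi(x)))$. If $\Phi\equiv 1$, then $\phi-\phi(0)$ is a continuous homomorphism of $\T$, so $w\in\Z$ comes for free from the description of continuous characters. Otherwise it derives a contradiction. On one side, $\kappa=\int|\Phi-1|\dd\mu_{\T^3}>0$ is nearly the average of $|\Phi-1|$ over $\T_N^3$, by weak convergence. On the other side, the analogue $\Phi^*$ built from $\phi^*$ equals $1$ off $O(\eta N^3)$ triples. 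This is because $\Phi^*(x,y,z)=1$ exactly when the fourth vertex $\alpha+\beta+\gamma$ of a configuration $\alpha,\alpha+\beta,\alpha+\gamma\in\Gamma$ also lies in $\Gamma$, and Cauchy--Schwarz gives at least $|W\cap\Gamma|^4/|W|\geq(1-O(\eta))N^3$ complete configurations.

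That argument uses only $|W\cap\Gamma-W\cap\Gamma|\leq|W|$, and it never has to control a slope uniformly in $N$. Your Step 1 instead extracts the structure exactly, and it is right: for $\eta<1/2$, $W$ is the graph of $x\mapsto wx+t$ on $\T_N$. You then pay for this by having to bound $w$. In return you avoid the cube functional and weak convergence entirely, and you get an effective statement: at each scale, $\phi$ is uniformly close to a single affine map whose slope is bounded in terms of the modulus of continuity $\omega$ of $\phi$.

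The step you flag as the main obstacle is easier than you expect. As phrased, though, your ``bounded steps'' would not bridge gaps of length of order $\eta\delta N$ in your fixed-$x$ set of good $j$'s. The fix is to let $x$ depend on $j$.
\begin{itemize}
\item For each fixed $j$, at most $2\eta N<N$ points $x\in\T_N$ have $x\in X$ or $x+j/N\in X$. So some $x$ has both $x$ and $x+j/N$ good, and therefore $\|jw/N\|_\T\leq 2\eta+\omega(\|j/N\|_\T)$ for every $j$.
\item Take $\delta\in(0,1/2]$ with $\omega(\delta)\leq 1/8$, and $\eta\leq 1/16$. Then $\|jw/N\|_\T\leq 1/4$ for all $1\leq j\leq\delta N$.
\item Your standard lemma now gives $|w|\leq N/(4\lfloor\delta N\rfloor)\leq 1/(2\delta)$ once $\delta N\geq 1$, for the representative of $w$ in $(-N/2,N/2]$.
\end{itemize}
Your averaging approach can also be rescued: write each $j$ as a sum or difference of two good $j$'s and use $\|(j_1\pm j_2)w/N\|_\T\leq\|j_1w/N\|_\T+\|j_2w/N\|_\T$, rather than stepping. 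With the slope bounded independently of $N$ and $\eta$, your Step 3 (finitely many slopes, a subsequence with $t_N\to t$, then continuity) goes through as written.
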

\begin{proof}
Define $\Phi$ on $\T^3$ by
\begin{equation*}
\Phi(x,y,z):=\exp(2\pi i (\phi(x+y+z)-\phi(x+y)-\phi(x+z) +\phi(x))).
\end{equation*}
If $\Phi(x,y,z)=1$ for all $x,y,z \in \T$ then the map $x \mapsto \phi(x)-\phi(0)$ is a homomorphism. Since it is also continuous, it has the form $x \mapsto wx$ for some $w \in \Z$ (see \cite[p13]{rud::1}), and the result is proved.

It follows that we may assume that $\Phi(x,y,z) \neq 0$ for some $x,y,z \in \T$ and hence, since $\Phi$ is continuous, that
\begin{equation}\label{eqn.kappa}
\kappa:=\int{|\Phi-1|\dd \mu_{\T^3}}>0.
\end{equation}
Again, by continuity of $\Phi$ and the fact that $\mu_{\T_N}$ converges weakly to $\mu_\T$ as $N \rightarrow \infty$, there is $N_0$ such that for all $N\geq N_0$ we have
\begin{equation}\label{eqn.contra}
\left|\int{|\Phi-1|\dd \mu_{\T^3}} -\int{|\Phi-1|\dd \mu_{\T_N^3}} \right|<\frac{1}{2}\kappa.
\end{equation}

Now, by the hypotheses of the lemma applied with $\eta:=\frac{1}{4}\min\{1,\frac{1}{2\refconstbig{5}},\frac{1}{\refconstbig{exp}}\}\kappa>0$ there is $N \geq N_0$ and $\phi^*:\T_N \rightarrow \T$ with
\begin{equation*}
\|\phi(x)-\phi^*(x)\|_\T\leq \eta \text{ for all } x \in \T_N.
\end{equation*}
Define $\Phi^*$ on $\T_N^3$ by
\begin{equation*}
\Phi^*(x,y,z):=\exp(2\pi i (\phi^*(x+y+z)-\phi^*(x+y)-\phi^*(x+z) +\phi^*(x))),
\end{equation*}
so that
\begin{equation*}
|\Phi^*(x,y,z) - \Phi(x,y,z)| \leq \newconstbig{exp}\max\{\|\phi^*(t)-\phi(t)\|_\T: t \in \{x,x+y,x+z,x+y+z\}\},
\end{equation*}
and hence 
\begin{equation}\label{eqn.expest}
|\Phi^*(x,y,z) - \Phi(x,y,z)| \leq \refconstbig{exp}\eta  \text{ for all }x,y,z \in \T_N.
\end{equation}
The hypotheses also give $W$, a coset of a subgroup of $\T^2$, such that if $\Gamma$ is the graph of $\phi^*$ then $|\Gamma\triangle W|\leq 2\eta N$.

There is a one-to-one correspondence between triples $(x,y,z)\in \T_N^3$ and triples $(\alpha,\beta,\gamma)$ with $\alpha,\alpha+\beta,\alpha+\gamma \in \Gamma$ given by $\alpha:=(x,\phi^*(x)) \in \Gamma$, $\alpha+\beta:= (x+y,\phi^*(x+y))\in \Gamma$ and $\alpha+\gamma:=(x+z,\phi^*(x+z))\in \Gamma$. Moreover, $\Phi^*(x,y,z)\neq 1$ if and only if $\alpha+\beta+\gamma \notin \Gamma$. Hence there are at most
\begin{align*}
&\sum_{\alpha,\beta,\gamma}{1_{\Gamma^c}(\alpha+\beta+\gamma)1_\Gamma(\alpha+\beta)1_\Gamma(\alpha+\gamma)1_\Gamma(\alpha)}\\
& \qquad \qquad = |\Gamma|^3-\sum_{\alpha,\beta,\gamma}{1_\Gamma(\alpha+\beta+\gamma)1_\Gamma(\alpha+\beta)1_\Gamma(\alpha+\gamma)1_\Gamma(\alpha)}
\end{align*}
triples $(x,y,z)\in \T_N^3$ with $\Phi^*(x,y,z)\neq 1$. Now
\begin{align*}
& \sum_{\alpha,\beta,\gamma}{1_\Gamma(\alpha+\beta+\gamma)1_\Gamma(\alpha+\beta)1_\Gamma(\alpha+\gamma)1_\Gamma(\alpha)}\\ & \qquad \qquad \geq  \sum_{\alpha,\beta,\gamma}{1_{W\cap \Gamma}(\alpha+\beta+\gamma)1_{W\cap \Gamma}(\alpha+\beta)1_{W\cap \Gamma}(\alpha+\gamma)1_{W\cap \Gamma}(\alpha)}\\
&\qquad \qquad =  \sum_{\alpha}{\left(\sum_{\beta}{1_{W\cap \Gamma}(\alpha+\beta)1_{W \cap \Gamma}(\beta)}\right)^2}\\
&\qquad \qquad \geq \frac{1}{|W\cap \Gamma - W\cap \Gamma|}\left(\sum_{\alpha,\beta}{1_{W\cap \Gamma}(\alpha+\beta)1_{W \cap \Gamma}(\beta)}\right)^2\\ &\qquad \qquad  =\frac{|W \cap \Gamma|^4}{|W\cap \Gamma - W\cap \Gamma|}\geq \frac{|W\cap \Gamma|^4}{|W|}\geq \frac{(1-2\eta)^4}{1+2\eta}|\Gamma|^3 \geq (1-\newconstbig{5}\eta)|\Gamma|^3.
\end{align*}
It follows that the number of triples $(x,y,z) \in \T_N^3$ such that $\Phi^*(x,y,z)\neq 1$ is at most $\refconstbig{5} \eta N^3$. Hence, by (\ref{eqn.expest}),\begin{equation*}
\int{|\Phi-1|\dd \mu_{\T_N^3}} \leq \int{|\Phi^*-1|\dd \mu_{\T_N^3} }+ \int{|\Phi^*-\Phi|\dd \mu_{\T_N^3}} \leq 2\refconstbig{5}\eta +\refconstbig{exp}\eta\leq\frac{1}{2}\kappa,
\end{equation*}
which is a contradiction to (\ref{eqn.contra}) and (\ref{eqn.kappa}).
\end{proof}

\bibliographystyle{halpha}

\bibliography{references}

\end{document}